\documentclass[[a4paper,UKenglish,cleveref, autoref, thm-restate]{lipics-v2021}
\bibliographystyle{plainurl}

\usepackage{xcolor}
\usepackage{extarrows}
\usepackage{amsthm}
\usepackage{amsmath}
\usepackage{amssymb}
\usepackage{amsfonts}
\usepackage{graphicx}
\usepackage[ruled]{algorithm2e}
\usepackage{thm-restate}
\usepackage{mathtools}

\allowdisplaybreaks
\nolinenumbers

\newtheorem{thrm}{Theorem}[section]
\newtheorem{lem}[thrm]{Lemma}
\newtheorem{prop}[thrm]{Proposition}

\theoremstyle{definition}
\newtheorem{defn}[thrm]{Definition}
\theoremstyle{definition}

\theoremstyle{definition}

\theoremstyle{definition}
\newtheorem{rmk}[thrm]{Remark}

\newcommand{\Z}{\mathbb{Z}}
\newcommand{\N}{\mathbb{N}}
\newcommand{\Q}{\mathbb{Q}}
\newcommand{\Rpp}{\mathbb{R}_{> 0}}
\newcommand{\Rp}{\mathbb{R}_{\geq 0}}

\newcommand{\R}{\mathbb{R}}
\newcommand{\C}{\mathbb{C}}

\newcommand{\Zpp}{\mathbb{Z}_{>0}}
\newcommand{\Np}{\mathbb{N}_{>0}}
\newcommand{\mG}{\mathcal{G}}

\newcommand{\W}{\operatorname{W}}
\newcommand{\U}{\operatorname{U}}

\newcommand{\Coef}{\operatorname{Coef}}
\newcommand{\oB}{\overline{B}}

\title{Solving homogeneous linear equations over polynomial semirings}
\author{Ruiwen Dong}{Department of Computer Science, University of Oxford}{ruiwen.dong@kellogg.ox.ac.uk}{}{}

\authorrunning{R Dong}

\Copyright{Ruiwen Dong} 

\ccsdesc[500]{Computing methodologies~Symbolic and algebraic manipulation} 

\keywords{wreath product, identity problem, polynomial semiring, positive polynomial} 

\category{} 


\supplement{}

\acknowledgements{The author would like to thank Markus Schweighofer for useful discussions and feedback and for pointing out the references~\cite{prestel2007lectures}~and~\cite{prestel2013positive}.}

\begin{document}

\maketitle
\begin{abstract}
    For a subset $B$ of $\R$, denote by $\U(B)$ be the semiring of (univariate) polynomials in $\R[X]$ that are strictly positive on $B$.
    Let $\N[X]$ be the semiring of (univariate) polynomials with non-negative integer coefficients.
    We study solutions of homogeneous linear equations over the polynomial semirings $\U(B)$ and $\N[X]$.
    In particular, we prove local-global principles for solving single homogeneous linear equations over these semirings.
    We then show PTIME decidability of determining the existence of non-zero solutions over $\N[X]$ of single homogeneous linear equations.

    Our study of these polynomial semirings is largely motivated by several semigroup algorithmic problems in the wreath product $\Z \wr \Z$.
    As an application of our results, we show that the Identity Problem (whether a given semigroup contains the neutral element?) and the Group Problem (whether a given semigroup is a group?) for finitely generated sub-semigroups of the wreath product $\Z \wr \Z$ is decidable when elements of the semigroup generator have the form $(y, \pm 1)$.
\end{abstract}

\newpage

\section{Introduction}
Linear equations over semirings appear in various domains in mathematics and computer science, such as automata theory, optimization, and algebra of formal processes~\cite{ashby1956automata, baccelli1992synchronization, bergstra1984algebra, eilenberg1974automata}.
There have been numerous studies on linear equations over different semirings~\cite{golan2013semirings}, for example the semiring of natural numbers (integer programming), tropical semirings~\cite{pin1998tropical} and polynomial semirings~\cite{dale1976monic, narendran1996solving}.
Given a semiring $S$, define $S[X]$ to be the set of polynomials in variable $X$ whose coefficients are elements of $S$.
The set $S[X]$ is again a semiring.
One of the simplest polynomial semirings is the semiring $\N[X]$ of single variable polynomials with non-negative integer coefficients.
The problem of solving a system of linear equations over $\N[X]$ was shown to be undecidable by Narendran~\cite{narendran1996solving} using a reduction from Hilbert's tenth problem.
More precisely, given integer polynomials $h_{ij}, g_j \in \Z[X], i = 1, \ldots, n, j = 1, \ldots, k$, it is undecidable whether the system of equations
\begin{equation}\label{eq:undeceq}
    f_1 h_{1j} + \cdots + f_n h_{nj} = g_j, \quad j = 1, \ldots, k,
\end{equation}
has a solution $(f_1, \ldots, f_n)$ over $\N[X]$.
This contrasts with the decidability of solving systems of linear equations over $\N$ and over $\Z[X]$ (using respectively integer programming~\cite{kandri1988computing} and Smith canonical forms~\cite{kannan1985solving}).

In this paper, we show a decidability result for finding a non-zero solution of a \emph{single homogeneous} linear equation over $\N[X]$.
In particular, we are concerned with the following problem:
given integer polynomials $h_1, \ldots, h_n \in \Z[X]$, does the equation 
\begin{equation}\label{eq:maineq}
    f_1 h_1 + \cdots + f_n h_n = 0
\end{equation}
admit a solution $(f_1, \ldots, f_n)$ over $\N[X] \setminus \{0\}$ (i.e.\ non of the $f_i$ is zero)?

In Section~\ref{sec:dec} of this paper we give a PTIME algorithm that decides this problem.
Our algorithm relies on a local-global principle which we prove in Section~\ref{sec:locglob}, and reduces the decision problem to the existential theory of the reals in one variable.
Formal definitions of these results will be given in Section~\ref{sec:mainres}.

It turns out that the problem of solving linear equations over the semiring $\N[X]$ is closely related to solving the same equation over the semiring $\U(B)$, consisting of polynomials in $\R[X]$ that are \emph{strictly positive} on a subset $B$ of $\R$.
It is also related to the semiring $\W(B)$ of polynomials that are \emph{non-negative} on $B$.
The characterization of polynomials in $\U(B)$ and $\W(B)$ is a central subject in the theory of real algebra.
In particular, when $B$ is a semialgebraic set, variants of the \emph{positivstellensatz} give explicit descriptions of the semirings $\U(B)$ and $\W(B)$.
This theory can be traced back to the celebrated Hilbert's seventeenth problem:
given a polynomial that takes only non-negative values over the reals, can it be represented as a sum of squares of rational functions?
This has been answered positively by Artin~\cite{artin1927zerlegung} using a model theoretic approach.
The techniques proposed by Artin have since developed into the rich theory of real algebra;
for a comprehensive account of this subject, see~\cite{prestel2007lectures} or~\cite{prestel2013positive}.
An important result in solving homogeneous linear equations over $\W(\R)$ is the Br\"ocker-Prestel's local-global principle for weak isotropy of quadratic forms~\cite[Theorem~8.12, 8.13]{prestel2007lectures}.
Applied over the function field $\R(X)$, the Br\"ocker-Prestel local-global principle relates the existence of non-trivial solutions over \emph{sums of squares} in $\R(X)$ (and hence over $\W(\R)$) of a homogeneous linear equation, to the behaviour of the equation in all \emph{Henselizations} of $\R(X)$.
In Section~\ref{sec:pospoly} of this paper we prove a ``strictly positive'' version of the Br\"ocker-Prestel local-global principle, which characterizes the existence of solutions over $\U(B)$.
This will serve as a base for proving further results in Section~\ref{sec:locglob} and \ref{sec:dec}.
Our proof is inspired by Prestel's proof of the original theorem. However, several new ideas are introduced to deal with the strict positivity as well as the positivity constraint over a subset of $\R$.

An important motivation for studying linear equations over $\N[X]$ comes from a semigroup algorithmic problem in the \emph{wreath product} $\Z \wr \Z$.
The wreath product is a fundamental construction in group and semigroup theory.
Given two groups $G$ and $H$, their wreath product $G \wr H$ is defined in the following way. 
Let $G^H$ be the set of all functions $y \colon H \rightarrow G$ with finite support; it is a group with respect to pointwise multiplication.
The group $H$ acts on $G^H$ as a group of automorphisms: if $h \in H, y \in G^H$, then $y^h(b) = y(b h^{-1})$ for all $b \in H$. 
The wreath product $G \wr H$ is then defined as the semi-direct product $G^H \rtimes H$, that is, the set of all pairs $(y, h)$ where $y \in G^H, h \in H$, with multiplication operation given by
\[
    (y, h) (z, k) = (y^{k} z, hk).
\]
One easy way to understand the group $\Z \wr \Z$ is through the its isomorphism to a matrix group over the Laurent polynomial ring $\Z[X, X^{-1}]$~\cite{magnus1939theorem}:
\begin{align}\label{eq:defphi}
    \varphi \colon \Z \wr \Z & \xrightarrow{\sim} \left\{ 
\begin{pmatrix}
        1 & f \\
        0 & X^{b}
\end{pmatrix}
\;\middle|\; f \in \Z[X, X^{-1}], b \in \Z 
\right\},
\quad
(y, b) \mapsto 
    \begin{pmatrix}
        1 & \sum_{k \in \Z} y(k) X^k \\
        0 & X^{b}
    \end{pmatrix}.
\end{align}

A large number of important groups are constructed using the wreath product, such as the lamplighter group $\Z_2 \wr \Z$~\cite{grigorchuk2001lamplighter} and groups resulting from the Magnus embedding theorem~\cite{magnus1939theorem}.
The wreath product also plays an important role in the algebraic theory of automata.
The Krohn–Rhodes theorem states that every finite semigroup (and correspondingly, every finite automaton) can be decomposed into elementary components using wreath products~\cite{krohn1965algebraic}.

In Section~\ref{sec:appwr} we give an application of our results to the \emph{Identity Problem} in $\Z \wr \Z$.
Given a finite set of elements $\mG = \{A_1, \ldots, A_k\}$ in a group $G$ as well as a target element $A \in G$, denote by $\langle \mG \rangle$ the semigroup of generated by $\mG$, and by $\langle \mG \rangle_{grp}$ the group generated by $\mG$.
Consider the following decision problems:
\begin{enumerate}[(i)]
    \item \emph{(Group Membership Problem)} whether $A \in \langle \mG \rangle_{grp}$?
    \item \emph{(Semigroup Membership Problem)} whether $A \in \langle \mG \rangle$?
    \item \emph{(Identity Problem)} whether the neutral element $I$ of $G$ is contained in $\langle \mG \rangle$?
\end{enumerate}

All three problems remain undecidable even when the ambient group $G$ is restricted to relatively simple groups, such as the direct product $F_2 \times F_2$ of two free groups over two generators~\cite{bell2010undecidability, mikhailova1966occurrence}.
Indeed, one of the first undecidability results in algorithmic theory is the undecidability of the Semigroup Membership Problem for integer matrices, obtained by Markov~\cite{markov1947certain}.
Some decidability results for the Identity Problem include its NP-completeness in $\mathsf{SL}(2, \Z)$~\cite{bell2017identity} and its PTIME decidability in nilpotent groups of class at most ten~\cite{https://doi.org/10.48550/arxiv.2208.02164}.

Let $p \in \Zpp$.
The group $\Z \wr \Z$ shares some common properties with the wreath product $\left(\Z / p \Z\right) \wr \Z$ and with the Baumslag-Solitar group $\mathsf{BS}(1, p)$.
Similar to the isomorphism~\eqref{eq:defphi}, both $\left(\Z / p \Z\right) \wr \Z$ and $\mathsf{BS}(1, p)$ can be represented as $2 \times 2$ upper triangular matrix groups:
\begin{align*}
\left(\Z / p \Z\right) \wr \Z & \cong \left\{ 
\begin{pmatrix}
        1 & f \\
        0 & X^{b}
\end{pmatrix}
\;\middle|\; f \in \left(\Z / p \Z\right)[X, X^{-1}], b \in \Z 
\right\},
\\
\mathsf{BS}(1, p) & \cong \left\{ 
\begin{pmatrix}
        1 & f \\
        0 & p^{b}
\end{pmatrix}
\;\middle|\; f \in \Z[1/p], b \in \Z 
\right\}.
\end{align*}
Lohrey, Steinberg and Zetzsche showed decidability of the \emph{Rational Subset Membership Problem} (which subsumes all three decision problems) in $H \wr V$, where $H$ is a finite and $V$ is virtually free~\cite{lohrey2015rational}.
This notably implies its decidability in $\left(\Z / p \Z\right) \wr \Z$.
Cadilhac, Chistikov and Zetzsche proved its decidability in $\mathsf{BS}(1, p)$~\cite{DBLP:conf/icalp/CadilhacCZ20}.
For $\Z \wr \Z$, decision problems are much harder due to higher encoding power of the ring $\Z[X, X^{-1}]$.
The Group Membership Problem in $\Z \wr \Z$ can be reduced to the membership problem for modules over the $\Z[X, X^{-1}]$, and is hence decidable~\cite{romanovskii1974some}.
As for the Semigroup Membership Problem in $\Z \wr \Z$, Lohrey \emph{et al.} showed its undecidability using an encoding of 2-counter machines~\cite{lohrey2015rational}.
Decidability of the Identity Problem in $\Z \wr \Z$ remains an intricate open problem.
In this paper we give a decidability result in the case where all the elements of the generator $\mG$ are of the form $(y, \pm 1)$. 

\section{Main results}\label{sec:mainres}
In this section we sum up the main results of this paper.
For a subset $B$ of $\R$, denote by $\U(B)$ the set of polynomials in $\R[X]$ that are strictly positive on $B$:
\[
    \U(B) \coloneqq \{f \in \R[X] \mid f(x) > 0 \text{ for all } x \in B\}.
\]
Define $\oB$ to be the closure of $B$ in $\R$ under the Euclidean topology.
Our first result is a local-global principle for solutions of homogeneous linear equations over $\U(B)$.
Theorem~\ref{thm:locglobpos} will be proved in Section~\ref{sec:pospoly}.
\begin{restatable}{thrm}{locglobpos}\label{thm:locglobpos}
    Given polynomials $h_1, \ldots, h_n \in \R[X]$ that satisfy $\gcd(h_1, \ldots, h_n) = 1$, let $B$ be a subset of $\R$.
    If the equation $f_1 h_1 + \cdots + f_n h_n = 0$ has no solution $(f_1, \ldots, f_n)$ over $\U(B)$, then there exists a real number $t \in \overline{B}$, such that the values $h_i(t), i = 1, \ldots, n$ are either all non-negative or all non-positive.
\end{restatable}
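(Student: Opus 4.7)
The plan is to prove the contrapositive. Assume that for every $t \in \overline{B}$ there exist indices $i, j$ with $h_i(t) > 0$ and $h_j(t) < 0$; I will construct $f_1, \ldots, f_n \in \U(B)$ satisfying $\sum_i f_i h_i = 0$. The approach mirrors the structure of Prestel's proof of the classical Br\"ocker--Prestel local-global principle, but strengthened to handle strict positivity and the restriction to a subset $B$ of $\R$.

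First I would reduce the problem to a compact ambient set. If $\overline{B}$ is unbounded, a suitable rational change of variable (or, equivalently, working inside $\mathbb{P}^1(\R)$, so that the real places of $\R(X)$ are parameterised by a compact set) reduces the situation to a compact subset, with the sign-mixture hypothesis extending to the point at infinity via inspection of the leading coefficients of the $h_i$. Continuity and compactness then yield a finite open cover $U_1, \ldots, U_m$ of $\overline{B}$ on each of which a specific pair $(i_\ell, j_\ell)$ satisfies $h_{i_\ell} > 0$ and $h_{j_\ell} < 0$ throughout. Each pair produces a local syzygy $s^{(\ell)} := h_{i_\ell}\, e_{j_\ell} - h_{j_\ell}\, e_{i_\ell} \in \R[X]^n$, which satisfies $\sum_k s^{(\ell)}_k h_k \equiv 0$ identically and whose $i_\ell$-th and $j_\ell$-th components are strictly positive on $U_\ell$.

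Second I would glue the local syzygies using polynomial weights $p_\ell$ playing the role of a partition of unity on the compact set $\overline{B}$. A positivstellensatz-style construction (Schm\"udgen's theorem, or direct univariate engineering using suitable products of positive shifts) produces nonnegative polynomials $p_\ell \in \R[X]$ concentrated near $U_\ell$, with $\sum_\ell p_\ell$ strictly positive on $B$. Setting $f_k := \sum_\ell p_\ell\, s^{(\ell)}_k$ then preserves $\sum_k f_k h_k = 0$ by linearity. Ensuring $f_k > 0$ on $B$ for every index $k$ will require the cover to be refined so that each $k$ appears as $i_\ell$ or $j_\ell$ on a patch meeting every point of $B$; the $\gcd$ condition rules out the degenerate case where some $h_k$ vanishes identically on a connected component and enables such a refinement.

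The main obstacle I anticipate is this gluing step: polynomial partitions of unity do not literally exist, so one must carefully engineer positive polynomial weights and simultaneously guarantee strict positivity of every component $f_k$ on $B$. A further subtlety is the asymmetry between $B$ and $\overline{B}$: the sign-mixture hypothesis lives on the closure, whereas strict positivity is demanded on $B$ itself, so the argument must tolerate vanishing of the $p_\ell$ (and of individual syzygy components) on $\overline{B} \setminus B$ without compromising positivity on $B$.
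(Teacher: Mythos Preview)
Your approach is fundamentally different from the paper's. The paper does \emph{not} construct the $f_i$ directly; it argues by contradiction in the style of real algebra. Assuming no solution over $\U(B)$ exists, it builds a pre-semicone
\[
P_0 = \left\{\tfrac{g}{G}\sum_i f_i h_i : f_i\in\U(B),\ g,G\in\W(\R)\setminus\{0\}\right\}\subset\R(X),
\]
extends it to a semicone $P$, extracts from $P$ a non-trivial real place $v$ of $\R(X)$ with $\R\subset A_v$, and then reads off the desired $t\in\overline{B}$ from the classification of such places (Proposition~\ref{prop:class}). So despite your opening sentence, your outline does not mirror Prestel's method at all; it is an attempted elementary construction.

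That construction has a genuine gap at exactly the point you flag. The components of your local syzygy $s^{(\ell)}$ are $-h_{j_\ell}$ and $h_{i_\ell}$, which are positive on $U_\ell$ but may well be negative elsewhere on $B$; since nonzero polynomial weights $p_\ell$ cannot vanish outside $U_\ell$, the sum $f_k=\sum_\ell p_\ell s^{(\ell)}_k$ can pick up negative contributions you cannot control. Worse, your proposed refinement (``each $k$ appears as $i_\ell$ or $j_\ell$ on a patch meeting every point of $B$'') is impossible whenever $h_k(t_0)=0$ for some $t_0\in B$: no patch containing $t_0$ can use index $k$, so $f_k(t_0)$ receives only contributions from distant patches, which need not be positive. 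Concretely, take $h_1=1,\ h_2=-1,\ h_3=X$ with $B=\R$: the pair $(1,2)$ already covers all of $\R$, yet the syzygy $e_1+e_2$ has third component $0$, and any attempt to bring in index $3$ via $s'=(0,X,1)$ or $s''=(-X,0,1)$ introduces a component that is negative on a half-line. A solution exists (e.g.\ $f=(X^2-X+1,\,X^2+X+1,\,2)$), but it does not arise from nonnegative polynomial weights on your pairwise syzygies; one needs genuinely signed $\R[X]$-coefficients in the syzygy module, and then positivity of all $f_k$ on $B$ is no longer automatic. Your sketch gives no mechanism to overcome this, and the $\gcd$ hypothesis does not help here.
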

Our second result is a corollary of the previous theorem, it provides a similar local-global principle for solutions over $\N[X] \setminus \{0\}$.
Theorem~\ref{thm:mainlocglob} will be proved in Section~\ref{sec:locglob}.

\begin{restatable}{thrm}{mainlocglob}\label{thm:mainlocglob}
    Given polynomials $h_1, \ldots, h_n \in \Z[X]$ with $\gcd(h_1, \ldots, h_n) = 1$.
    If the equation $f_1 h_1 + \cdots + f_n h_n = 0$ has no solution $(f_1, \ldots, f_n)$ over $\N[X] \setminus \{0\}$, then there exists $t \in \Rp$, such that the values $h_i(t), i = 1, \ldots, n$ are either all non-negative or all non-positive.
\end{restatable}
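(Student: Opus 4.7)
The plan is to prove the contrapositive of the theorem: I will assume that for every $t \in \Rp$ the values $h_1(t), \ldots, h_n(t)$ are neither all $\geq 0$ nor all $\leq 0$, and construct a solution $(F_1, \ldots, F_n) \in (\N[X] \setminus \{0\})^n$ of $\sum_i F_i h_i = 0$. First, I invoke the contrapositive of Theorem~\ref{thm:locglobpos} with $B = \Rpp$ (so that $\overline{B} = \Rp$), which directly yields a solution $(f_1, \ldots, f_n) \in \U(\Rpp)^n$ of the same equation. Note that $\gcd(h_1, \ldots, h_n) = 1$ in $\Z[X]$ implies the same in $\R[X]$, so Theorem~\ref{thm:locglobpos} applies.

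Next, I push the coefficients of each $f_i$ into $\Rp$ by means of a classical single-variable form of P\'olya's theorem (essentially due to Poincar\'e): if $g \in \R[X]$ satisfies $g(x) > 0$ for all $x \geq 0$, then $(1+X)^N g$ has non-negative coefficients for all sufficiently large $N$. For each $i$, I factor $f_i = X^{e_i} g_i$ with $g_i(0) \neq 0$ and $e_i \geq 0$; the fact that $f_i(x) > 0$ on $\Rpp$ combined with continuity at $0$ forces $g_i(0) > 0$, so $g_i \in \U(\Rp)$. Applying the classical theorem yields an $N_i$ with $(1+X)^{N_i} g_i \in \Rp[X]$, and hence $(1+X)^{N_i} f_i = X^{e_i}(1+X)^{N_i}g_i \in \Rp[X] \setminus \{0\}$. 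Choosing $N \geq \max_i N_i$ uniformly, the polynomials $\tilde f_i := (1+X)^N f_i$ form an $(\Rp[X]\setminus\{0\})^n$-solution of the original equation.

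Finally, I will convert this real solution into an integer one by rational approximation. Let $D := \max_i \deg \tilde f_i$ and $V := \{G \in \R[X]_{\leq D}^n : \sum_i G_i h_i = 0\}$. Because $h_i \in \Z[X]$, $V$ is a $\Q$-linear subspace, and $C := V \cap \Rp[X]_{\leq D}^n$ is a $\Q$-defined convex polyhedral cone. The subset of $C$ on which each $G_i \neq 0$ is open in $C$ (the complement of finitely many proper rational subspaces) and non-empty (it contains $(\tilde f_1, \ldots, \tilde f_n)$). Since rational points are dense in any $\Q$-defined polyhedral cone, this open set contains a rational solution $(F_1, \ldots, F_n) \in (\Qp[X] \setminus \{0\})^n$. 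Multiplying each $F_i$ by a common positive integer denominator then delivers the desired element of $(\N[X] \setminus \{0\})^n$.

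The main subtlety will be the application of Poincar\'e--P\'olya: strict positivity is only guaranteed on $\Rpp$, not on $\Rp$, which is why the monomial factor $X^{e_i}$ absorbing any possible root at $0$ must be extracted before one can invoke the hypothesis $g_i(x) > 0$ for all $x \geq 0$. The remaining two steps --- rational approximation in a $\Q$-defined polyhedral cone and clearing denominators --- should be routine convex/linear algebra.
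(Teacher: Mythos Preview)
Your proposal is correct and follows essentially the same route as the paper: the paper proves the theorem by chaining Proposition~\ref{prop:NtoR} (the $\N[X]\leftrightarrow\Rp[X]$ step, via linear algebra over $\Q$), Proposition~\ref{prop:RtoU} (the $\Rp[X]\leftrightarrow\U(\Rpp)$ step, via P\'olya after factoring out the power of $X$), and Theorem~\ref{thm:locglobpos} with $B=\Rpp$, and you have simply inlined the contrapositives of these same three ingredients. The only cosmetic differences are that the paper invokes the two-variable homogeneous P\'olya and then dehomogenizes (equivalent to your single-variable Poincar\'e--P\'olya), and phrases the $\Q$-descent as ``a homogeneous integer linear system solvable over $\R$ is solvable over $\Q$, hence over $\Z$'' rather than as density of rational points in a $\Q$-defined cone.
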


Our next result shows that it is decidable in PTIME whether a linear homogeneous equation is solvable over $\N[X] \setminus \{0\}$.
The input size is defined as the total number of bits used to encode all the coefficients of all $h_{i}$.
Theorem~\ref{thm:decidability} will be proved in Section~\ref{sec:dec}.

\begin{restatable}{thrm}{decidability}\label{thm:decidability}
    Given as input $h_1, \ldots, h_n \in \Z[X]$. 
    It is decidable in polynomial time whether the equation $f_1 h_1 + \cdots + f_n h_n = 0$ has solutions $f_1, \ldots, f_n$ over $\N[X] \setminus \{0\}$.
\end{restatable}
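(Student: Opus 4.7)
My plan for Theorem~\ref{thm:decidability} is to combine Theorem~\ref{thm:mainlocglob} with the known polynomial-time decidability of the existential theory of real closed fields in a single variable.

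\smallskip
\noindent\textbf{Preprocessing.} I first compute $d \coloneqq \gcd(h_1, \ldots, h_n) \in \Z[X]$ via the subresultant gcd algorithm, which runs in polynomial time and produces a polynomial of polynomially-bounded bit-size. Replacing each $h_i$ by $h_i/d$ preserves the solvability of $\sum_i f_i h_i = 0$, since $\R[X]$ is an integral domain, and secures the hypothesis $\gcd(h_1,\ldots,h_n) = 1$ required by Theorem~\ref{thm:mainlocglob}. Degenerate inputs ($n \leq 1$, or some $h_i = 0$) are disposed of by an elementary direct check.

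\smallskip
\noindent\textbf{Reduction to a real-analytic condition.} By Theorem~\ref{thm:mainlocglob} (more precisely its contrapositive), if no $t \in \Rp$ satisfies the same-sign condition, then a solution exists over $\N[X] \setminus \{0\}$. For the converse direction I argue directly: at any $t > 0$, every nonzero $f_i \in \N[X]$ satisfies $f_i(t) > 0$, and $\gcd = 1$ prevents all $h_i(t)$ from vanishing simultaneously; hence if $h_1(t), \ldots, h_n(t)$ were all of one sign, we would obtain $\sum_i f_i(t) h_i(t) \neq 0$, contradicting the equation. The boundary point $t = 0$, where $f_i(0)$ may vanish, is handled by a short separate analysis of the lowest-order coefficients of the $h_i$.

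\smallskip
\noindent\textbf{Polynomial-time decision.} The remaining task is to decide the sentence
\[
\exists\, t \in \Rp \colon \Bigl(\bigwedge_{i=1}^n h_i(t) \geq 0\Bigr) \ \vee\ \Bigl(\bigwedge_{i=1}^n h_i(t) \leq 0\Bigr),
\]
which is an existential formula in the first-order theory of real closed fields with a single variable and polynomials of polynomially-bounded bit-size. Standard algorithms based on Sturm sequences or subresultant-based real root isolation decide such sentences in polynomial time, by partitioning $\Rp$ into the finitely many sign-invariant intervals of $h_1, \ldots, h_n$ and checking the sign patterns on each interval and its endpoints.

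\smallskip
\noindent\textbf{Main obstacle.} The principal subtlety is the converse direction in the reduction at the boundary point $t = 0$: because $f_i(0)$ can be zero for nonzero $f_i \in \N[X]$, the clean positivity argument used for $t > 0$ breaks down, and one must examine the leading-order behaviour of the $h_i$ at the origin to upgrade the one-directional local-global principle into an equivalence suitable for algorithmic use.
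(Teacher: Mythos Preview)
Your overall strategy coincides with the paper's: reduce to $\gcd(h_1,\ldots,h_n)=1$, invoke Theorem~\ref{thm:mainlocglob} for one direction, argue the converse directly for $t>0$, and finish with a one-variable real decision procedure. You also correctly isolate the boundary $t=0$ as the only genuine obstacle.

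However, the existential sentence you write down under ``Polynomial-time decision'' does \emph{not} characterise solvability without further work, and your ``short separate analysis of the lowest-order coefficients'' is left unspecified. Concretely, take $h_1=1$, $h_2=-X$ (so $\gcd=1$): the point $t=0$ gives $(h_1(0),h_2(0))=(1,0)$, which satisfies ``all $\geq 0$'', yet $(f_1,f_2)=(X,1)$ is a solution over $\N[X]\setminus\{0\}$. Thus the equivalence ``solvable $\iff$ no $t\in\Rp$ with same-sign'' is false as stated, and the $t=0$ case cannot be dispatched by a side remark.

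The paper resolves this not by a separate test at $t=0$ but by an iterative \emph{preprocessing} of the $h_i$ themselves: whenever all $h_i(0)$ are of one weak sign with some $h_i(0)=0$, it replaces each such $h_i$ by $h_i/X$, and shows this preserves solvability over $\N[X]\setminus\{0\}$ (because any solution must have $X\mid f_j$ for those $j$ with $h_j(0)\neq 0$). This loop terminates in at most $\sum_i \deg h_i$ steps, ending either with all $h_i(0)$ of the same \emph{strict} sign (whence no solution, by the constant-term argument) or with $h_i(0)>0$ and $h_j(0)<0$ for some $i,j$. In the latter case $t=0$ is automatically excluded from the same-sign locus, and your equivalence and existential sentence become correct. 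This preprocessing is exactly the ``analysis of lowest-order coefficients'' you gesture at; making it explicit is what turns the sketch into a proof.
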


An application of this theorem is the following partial decidability result on the Identity Problem in the wreath product $\Z \wr \Z$.
This will be the main topic of Section~\ref{sec:appwr}.
\begin{restatable}{thrm}{wreathdec}\label{thm:wreathdec}
    Given a finite set of elements $\mG = \{(y_1, b_1), \ldots, (y_n, b_n)\}$ in $\Z \wr \Z$, where $b_i = \pm 1$ for all $i$.
    The following are decidable:
    \begin{enumerate}
        \item (Group Problem) whether the semigroup $\langle \mG \rangle$ generated by $\mG$ is a group.
        \item (Identity Problem) whether the neutral element $I$ is in the semigroup $\langle \mG \rangle$.
    \end{enumerate}
\end{restatable}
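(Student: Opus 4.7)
The plan is to encode both problems into the single-polynomial-equation form of Theorem~\ref{thm:decidability}. Via the matrix isomorphism $\varphi$ of~\eqref{eq:defphi}, each generator $g_i = (y_i, b_i)$ has polynomial part $f_i \in \Z[X, X^{-1}]$ and diagonal $X^{b_i}$, so the product $g_{i_1} \cdots g_{i_k}$ has polynomial part $\sum_{j=1}^k X^{c_j} f_{i_j}$, with $c_j \coloneqq b_{i_1} + \cdots + b_{i_{j-1}}$, and second coordinate $X^{c_{k+1}}$. Hence $I \in \langle \mG \rangle$ iff there is a non-empty word with $c_{k+1} = 0$ and $\sum_j X^{c_j} f_{i_j} = 0$. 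Since $b_i = \pm 1$, the sequence $c_1, \ldots, c_{k+1}$ traces a closed walk on $\Z$ with unit steps. Splitting $\mG$ into up-generators $U = \{i : b_i = 1\}$ and down-generators $D = \{j : b_j = -1\}$, each up-traversal of an edge $(\ell, \ell+1)$ in this walk can be paired bijectively with a down-traversal of the same edge by conservation of flow. If $n_{i,j}^{(\ell)} \in \N$ counts the pairs in which $i \in U$ is used for the up-step at level $\ell$ and $j \in D$ for the down-step at level $\ell+1$, and $p_{i,j}(X) \coloneqq \sum_\ell n_{i,j}^{(\ell)} X^\ell \in \N[X, X^{-1}]$, then the identity condition rewrites as
\[
    \sum_{(i,j) \in U \times D} p_{i,j}(X)\, h_{i,j}(X) = 0, \qquad h_{i,j} \coloneqq f_i + X f_j.
\]

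I claim $I \in \langle \mG \rangle$ iff this equation admits a nontrivial solution in $\N[X, X^{-1}]$. The forward implication is the encoding above. The converse, the main technical step, requires realizing a formal polynomial solution as a genuine walk. Given any nontrivial $(p_{i,j})$, I multiply every entry by $1 + X + \cdots + X^N$, with $N$ exceeding the largest gap in the support of $\sum_{i,j} p_{i,j}$; the new tuple still solves the equation, and its sum now has support equal to a single integer interval $[m, M]$. A further shift by a suitable power of $X$ places $0$ inside this support. The multigraph on $\Z$ with $u_\ell$ up-edges and $u_\ell$ down-edges between levels $\ell$ and $\ell+1$, where $u_\ell$ is the coefficient of $X^\ell$ in $\sum_{i,j} p_{i,j}$, is then connected, contains $0$, and has equal in- and out-degree everywhere; Euler's theorem yields a closed walk from $0$ to $0$, and labelling its transitions according to the $n_{i,j}^{(\ell)}$ produces an identity word in $\mG^+$.

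To invoke Theorem~\ref{thm:decidability}, multiply the $h_{i,j}$ by a large enough power of $X$ so that they lie in $\Z[X]$, and correspondingly take $p_{i,j} \in \N[X]$. Since the theorem requires all solution components to be nonzero, I iterate over every subset $T \subseteq U \times D$ and apply it to the sub-equation restricted to pairs in $T$; the Identity Problem is positive iff some $T$ yields a positive answer. For the Group Problem, I use the classical fact that $\langle \mG \rangle$ is a group iff $g_i^{-1} \in \langle \mG \rangle$ for every $g_i$, together with the equivalence $g_i^{-1} \in \langle \mG \rangle \iff$ there exists $W \in \mG^+$ containing $g_i$ with product $I$: if $W = U g_i V = I$, then $V U \in \mG^+$ equals $g_i^{-1}$. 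The ``$g_i$ appears'' constraint is enforced in the reduction by restricting the enumeration of $T$ to those subsets containing at least one pair involving $g_i$. The main obstacle throughout is the converse in the second paragraph---translating mere solvability of a polynomial equation over $\N[X, X^{-1}]$ into a concrete identity word in $\mG^+$ via the padding-and-shifting manipulation.
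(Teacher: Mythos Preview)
Your proposal is correct and lands on the same reduction as the paper (Proposition~\ref{prop:wrtopos}): solvability over $\N[X]\setminus\{0\}$ of $\sum_{(i,j)\in T} p_{i,j}\,h_{i,j}=0$ for suitable index sets $T$, then decided via Theorem~\ref{thm:decidability}. The two directions of the reduction, however, are proved differently. For the forward direction the paper argues by induction on word length with a case split on the first and last letters of $w$; your flow-conservation pairing of up- and down-traversals along the closed walk is more direct. For the converse the paper builds an explicit base word $A_u^{\cdots}A_y^{\cdots}B_z^{\cdots}B_v^{\cdots}$ and then inserts loops $A_iB_j$ at prescribed heights after multiplying by $(1+X)^m$; your padding by $1+X+\cdots+X^N$ followed by an Eulerian-circuit construction is more conceptual and sidesteps the bookkeeping. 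What makes your converse work (and which you use implicitly) is that the upper-right entry of $\pi(w)$ depends only on the \emph{marginals} $\sum_j n_{i,j}^{(\ell)}$ and $\sum_i n_{i,j}^{(\ell)}$, so any Eulerian circuit with those edge multiplicities yields the same polynomial regardless of how the circuit happens to pair up- and down-steps; you should make this explicit. For the Group Problem the paper enumerates sets $S$ with $\pi_I(S)=I$, $\pi_J(S)=J$, while you check each generator separately via $g_i^{-1}\in\langle\mG\rangle$; both characterizations are equivalent.

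One small correction: under the paper's $\varphi$ the upper-right entry of $g_{i_1}\cdots g_{i_k}$ is $\sum_j X^{d_j} f_{i_j}$ with $d_j=b_{i_{j+1}}+\cdots+b_{i_k}$ (suffix sums), not your prefix sums $c_j$. Your formula is what one obtains from the anti-isomorphic representation $(y,b)\mapsto\left(\begin{smallmatrix}X^b & f\\ 0 & 1\end{smallmatrix}\right)$; the argument is unaffected, but you should align the conventions with~\eqref{eq:defphi}.
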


\section{Preliminaries}\label{sec:prelim}
In this section we introduce the necessary mathematical tools on (semi)orderings of fields as well as valuations.
Most notations and definitions follow those given in Prestel's book~\cite{prestel2007lectures}.

\subsection{Orderings and semiorderings}

\begin{defn}[Ordering]
A linear ordering of a set $S$ is a binary relation that satisfies 
\begin{enumerate}[(i)]
    \item $a \leq a$,
    \item $a \leq b, b \leq c \implies a \leq c$,
    \item $a \leq b, b \leq a \implies a = a$,
    \item $a \leq b$ or $b \leq a$
\end{enumerate}
for all $a, b, c \in S$.

Given a field $F$ of characteristic zero, a \emph{(field) ordering} of $F$ is a linear ordering $\leq$ of the underlying set of $F$ that additionally satisfies
\begin{enumerate}[(i)]
    \item $a \leq b \implies a + c \leq b + c$,
    \item $0 \leq a, 0 \leq b \implies 0 \leq ab$
\end{enumerate}
for all $a, b, c \in F$.
A field is called \emph{formally real} if it admits at least one ordering.
\end{defn}

The \emph{semiordering} of a field, defined below, is a weaker version of the field ordering.
\begin{defn}[Semiordering]
A \emph{semiordering} of a field $F$ is a linear ordering $\leq$ of the underlying set of $F$ that satisfies
\begin{enumerate}[(i)]
    \item $a \leq b \implies a + c \leq b + c$,
    \item $0 \leq 1$,
    \item $0 \leq a \implies 0 \leq ab^2$
\end{enumerate}
for all $a, b, c \in F$.
\end{defn}
In a field $F$ with semiordering $\leq$, we have $0 \leq x^2$ for all $x \in F$.
The field of real numbers $\R$ hence admits a unique semiordering, since every positive real can be written as a square.
This semiordering is simply the natural ordering on $\R$.

It is easy to see that an ordering is always a semiordering.
Conversely, a semiordering need not be an ordering.
However, in any field, the existence of a semiordering implies that of an ordering.
\begin{lem}[{\cite[Corollary~1.15]{prestel2007lectures}}]\label{lem:115}
    A field $F$ is formally real (admits an ordering) if and only if it admits a semiordering.
\end{lem}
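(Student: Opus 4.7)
The plan is to treat the two directions separately. The forward implication (every ordering is a semiordering) is just an unpacking of the axioms: in any ordered field one has $1 = 1^2 \geq 0$, and the ordering axiom $0 \leq a,\, 0 \leq b \Rightarrow 0 \leq ab$ specialized with $b$ replaced by $b^2$ immediately yields the semiordering axiom $0 \leq a \Rightarrow 0 \leq ab^2$. So only the converse requires argument: I need to show that the existence of a semiordering on $F$ implies that $F$ is formally real.

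The key ingredient I would take as a black box is the Artin--Schreier characterization: a field of characteristic zero is formally real if and only if $-1$ is not a sum of squares. Thus, given a semiordering $\leq$ on $F$, it suffices to derive a contradiction from a hypothetical identity $-1 = a_1^2 + \cdots + a_k^2$.

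Three small observations feed into the contradiction. First, every square is non-negative: applying axiom (iii) with $a = 1$, which is non-negative by axiom (ii), gives $0 \leq 1 \cdot b^2 = b^2$ for all $b \in F$. Second, non-negative elements are closed under addition: if $0 \leq x$ and $0 \leq y$, then translation-invariance (axiom (i)) gives $y \leq x + y$, and transitivity with $0 \leq y$ yields $0 \leq x + y$. Iterating, any sum of squares is non-negative; in particular $0 \leq -1$. Adding $1$ to both sides via axiom (i) produces $1 \leq 0$, which combined with $0 \leq 1$ and antisymmetry forces $0 = 1$, contradicting the fact that $F$ is a field.

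There is no real obstacle here; the entire argument is a short bookkeeping exercise on the three semiordering axioms plus one invocation of Artin--Schreier. The only point that deserves emphasis is the role of axiom (ii): without asserting $0 \leq 1$ explicitly, one could not deduce that squares are non-negative from axioms (i) and (iii) alone, and the whole argument would collapse.
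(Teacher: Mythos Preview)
Your argument is correct. Note, however, that the paper does not supply its own proof of this lemma: it is quoted verbatim as a background result from Prestel's book, so there is no in-paper argument to compare against. The route you take --- deducing from the semiordering axioms that every sum of squares is $\geq 0$, hence that $-1$ cannot be a sum of squares, and then invoking the Artin--Schreier criterion --- is the standard one and is essentially how the corollary is derived in the cited reference.

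One cosmetic remark: you phrase Artin--Schreier for fields of characteristic zero, matching the paper's convention for orderings. Strictly speaking your contradiction $0=1$ already rules out positive characteristic (since in characteristic $p$ one has $-1 = (p-1)\cdot 1 \geq 0$ by the same additivity argument), so no separate hypothesis on the characteristic is needed; but this is harmless in context.
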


For a semiordering $\leq$ of $F$, the set $P \coloneqq \{a \in F \mid 0 \leq a\}$ satisfies
\begin{enumerate}[(i)]
    \item $P + P \subseteq P$,
    \item $F^2 \cdot P \subseteq P$ and $1 \in P$,
    \item $P \cap -P = \{0\}$,
    \item $P \cup -P = F$.
\end{enumerate}
Such a set will be called a \emph{semicone} of $F$.
A semicone $P$ of $F$ determines a semiordering $\leq$ of $F$ by $a \leq b \iff b - a \in P$.
Therefore, we will sometimes call $P$ a semiordering as well.

The \emph{pre-semicone} is yet a weaker version of the semiordering (or semicone).
\begin{defn}[Pre-semicone]\label{def:presemicone}
    A \emph{pre-semicone} of a field $F$ is a subset $P$ of $F$ that satisfies
    \begin{enumerate}[(i)]
        \item $P + P \subseteq P$,
        \item $F^2 \cdot P \subseteq P$,
        \item $P \cap -P = \{0\}$.
    \end{enumerate}
\end{defn}
The only difference between a pre-semicone and a semicone is the absence of the rule~(iv) and the condition $1 \in P$ in (ii).
Obviously every semicone is also a pre-semicone.
Conversely, a pre-semicone need not be a semicone, but it can always be extended to one.

\begin{lem}[{\cite[Lemma~1.13]{prestel2007lectures}}]\label{lem:113}
    For every pre-semicone $P_0$ of a formally real field $F$ there exists a set $P \supseteq P_0$ such that $P$ or $-P$ is a semicone of $F$.
\end{lem}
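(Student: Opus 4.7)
The plan is a Zorn's lemma argument: produce a maximal pre-semicone $P$ containing $P_0$, use maximality to force $P \cup -P = F$, and then conclude from $1 \in F$ that $P$ or $-P$ is a semicone. The setup for Zorn's lemma is routine, since each defining condition of a pre-semicone is of finite character, so any ascending chain of pre-semicones containing $P_0$ has its union again a pre-semicone containing $P_0$; this yields a maximal element $P \supseteq P_0$.

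The heart of the argument is to show that for every $a \in F$, at least one of $a, -a$ lies in $P$. I would form the candidate extension
\[
    \pi(a) \;\coloneqq\; P + \left( \textstyle\sum F^2 \right) \cdot a,
\]
where $\sum F^2$ denotes the set of finite sums of squares of elements of $F$; this $\pi(a)$ is closed under addition and under multiplication by $F^2$ and contains both $P$ and $a$. Either $\pi(a) \cap -\pi(a) = \{0\}$, in which case $\pi(a)$ is a pre-semicone extending $P$ and by maximality $\pi(a) = P$, forcing $a \in P$; or there exist $p \in P$ and $Q \in \sum F^2$, not both zero, with $p + Q a = 0$. In the second case I would use that $F$ is formally real (Lemma~\ref{lem:115} together with the Artin--Schreier fact that $\sum F^2$ is then itself a pre-semicone, so $\sum F^2 \cap -\sum F^2 = \{0\}$) to rule out $Q = 0$. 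Then $-a = p \cdot Q^{-1}$, and the identity $Q^{-1} = Q \cdot (1/Q)^2$ shows $Q^{-1} \in \sum F^2$, so closure of $P$ under $F^2$-multiplication and addition gives $-a \in P$.

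Once $P \cup -P = F$ is established, the conclusion is immediate: $1 \in P$ or $-1 \in P$, and in either case a direct check shows that $P$, respectively $-P$, is a semicone. The step I expect to require the most care is the extension argument: one must pick the candidate $\pi(a)$ to be large enough to be closed under the pre-semicone operations yet small enough that failure of axiom~(iii) converts cleanly into an explicit expression for $-a$ as an element of $P$, and use formal reality of $F$ in exactly the right place both to exclude $Q = 0$ and to invert $Q$ within the sums of squares.
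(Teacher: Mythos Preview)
The paper does not give its own proof of this lemma; it is simply quoted from Prestel's book. Your argument is correct and is essentially the standard proof found there: apply Zorn's lemma to obtain a maximal pre-semicone $P \supseteq P_0$, and for each $a \in F$ test the enlargement $\pi(a) = P + (\sum F^2)\,a$; either it is still a pre-semicone (forcing $a \in P$ by maximality), or failure of axiom~(iii) yields a relation $p + Qa = 0$ with $Q$ a nonzero sum of squares, whence $-a = p\,Q^{-1} \in P$. One minor clarification: formal reality enters precisely to ensure that $\sum F^2 \cap -\sum F^2 = \{0\}$, which is what forces $Q \neq 0$; once $Q \neq 0$, the inversion $Q^{-1} = Q \cdot (1/Q)^2 \in \sum F^2$ is pure algebra and needs no further hypothesis.
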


Suppose $F$ is of characteristic zero.
A semiordering or an ordering $\leq$ of $F$ is called \emph{archemedean} if for each $a \in F$ there exists $n \in \N \subseteq F$ such that $a \leq n$.

\begin{lem}[{\cite[Lemma~1.20]{prestel2007lectures}}]\label{lem:120}
    Every archimedean semiordering is an ordering.
\end{lem}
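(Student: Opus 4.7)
My plan is to show that the semiordering $\leq$ is induced by an embedding of $F$ into $\R$; since the standard ordering on $\R$ is multiplicative, this will force $\leq$ to be a field ordering.

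Since $F$ has characteristic zero, $\Q \subseteq F$, and one first verifies that $\leq$ restricts to the standard ordering on $\Q$: induction from $0 \leq 1$ gives $0 \leq n$ for $n \in \N$, and for $p, q \in \N_{>0}$ the identity $p/q = pq \cdot (1/q)^2$ together with axiom~(iii) yields $p/q \in P$. For each $a \in F$, the archimedean bounds on $a$ and $-a$ make the Dedekind cut $L_a := \{r \in \Q : r \leq a\}$ nonempty and bounded above in $\Q$, so $\phi(a) := \sup L_a \in \R$ is well-defined with $\phi|_\Q = \mathrm{id}_\Q$. A standard supremum argument converts additivity of $\leq$ into additivity of $\phi$, and $L_a \subseteq L_b$ whenever $a \leq b$ makes $\phi$ order-preserving.

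The key intermediate claim is a \emph{no positive infinitesimals} sublemma: if $a > 0$ in the semiordering, then $a \geq 1/M$ for some $M \in \N$. Its proof begins with the observation that axiom~(iii) applied to $a \in P$ with $b = 1/a$ yields $1/a = a \cdot (1/a)^2 \in P$, so $1/a > 0$; archimedeanness then gives $1/a \leq N$ for some $N$, and multiplying the resulting inequality $N - 1/a \geq 0$ by the square $a^2$ produces $Na^2 \geq a$. The tool for converting this to $Na \geq 1$ is Lagrange's four-square theorem, which expresses any positive rational as a sum of rational squares and thereby provides valid multipliers under axiom~(iii). Once established, the sublemma makes $\phi$ strictly order-preserving, hence injective, and gives the characterization $\phi(a) > 0 \Leftrightarrow a > 0$.

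The main obstacle is proving multiplicativity of $\phi$, i.e.\ $\phi(ab) = \phi(a)\phi(b)$; axiom~(iii) only permits multiplication by squares, so bridging this to arbitrary positive elements is the delicate point. My approach is rational approximation: for $a, b > 0$ with rationals $0 < p < \phi(a),\, 0 < q < \phi(b)$ we have $p \leq a$ and $q \leq b$, and the decomposition $ab - pq = p(b-q) + (a-p)b$ handles the term $p(b-q)$ via the sum-of-rational-squares argument (since $p$ is a sum of rational squares and $b - q \in P$), while the term $(a-p)b$ is controlled by combining the archimedean rational boundedness of $b$ with the fact that the sublemma allows $a-p$ to be made smaller than any rational $\epsilon > 0$. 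A squeezing argument then forces $\phi(ab) = \phi(a)\phi(b)$. Once $\phi: F \hookrightarrow \R$ is shown to be an injective order-preserving ring homomorphism, the given semiordering on $F$ coincides with the pullback of $\R$'s (field) ordering, so $\leq$ is itself an ordering, as desired.
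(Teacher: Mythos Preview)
The paper does not prove this lemma; it merely cites it from Prestel's book, so there is no in-paper argument to compare against. Your overall strategy---build an order-preserving additive map $\phi:F\to\R$ via Dedekind cuts and then show it is a ring embedding---is a standard and valid route to the result. However, two steps in your sketch are circular, and both reduce to assuming the very multiplicativity you are trying to establish.

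First, the sublemma. From $1/a\le N$ you correctly obtain $Na^2\ge a$ by multiplying by the square $a^2$. But passing from this to $Na\ge 1$ (equivalently $a\ge 1/N$) means showing $a\cdot(N-1/a)\in P$, a product of two elements of $P$; Lagrange's theorem only lets you multiply by \emph{rational} positives, not by $a$. No amount of rewriting with sums of rational squares bridges this gap.

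Second, multiplicativity. In your decomposition $ab-pq=p(b-q)+(a-p)b$, the claim that $(a-p)b$ is ``controlled'' because $0\le a-p\le\varepsilon$ and $b\le N$ presupposes an inequality of the form $|(a-p)b|\le \varepsilon N$; but $|xy|\le|x|\,|y|$ in a semiordering is exactly $P\cdot P\subseteq P$, which is what you must prove. Without that, $(a-p)b$ could a priori be large and negative in the semiordering no matter how small $\varepsilon$ is.

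A repair that stays within your framework: first prove the identity $\phi(cx^2)=\phi(c)\,\phi(x^2)$ for all $c,x$ (sandwich $c$ between rationals and multiply by the square $x^2$; note that $\phi(x^2)>0$ for $x\neq 0$ follows directly from the archimedean bound $1/x^2\le M$, which gives $x^2\ge 1/M$). Then $\psi(x):=\phi(x^2)$ is multiplicative, so $\delta(x):=\phi(x^2)-\phi(x)^2\ge 0$ satisfies $\delta(x^2)=0$. Expanding $\phi((x\pm y)^2)$ yields $\delta(x+y)\le 2(\delta(x)+\delta(y))$; writing any $x$ as $((x+1)/2)^2-((x-1)/2)^2$ then forces $\delta\equiv 0$. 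This gives multiplicativity (and the sublemma as a corollary) without ever multiplying two arbitrary elements of $P$.
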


\subsection{Valuations}\label{subsec:val}

Let $F$ be a field.
    A \emph{valuation} of $F$ is a surjective map $v \colon F \rightarrow \Gamma \cup \{\infty\}$, where the \emph{value group} $\Gamma$ is an abelian totally ordered group\footnote{
    An abelian totally ordered group $\Gamma$ is an abelian group equipped with a linear ordering $\leq$, such that $a \leq b \implies a + c \leq b + c$ for all $a, b, c \in \Gamma$.
    Here, the group law of $\Gamma$ is written additively.
    The ordering and the group law on $\Gamma$ can be extended to the set $\Gamma \cup \{\infty\}$ by defining $a \leq \infty$ and $a + \infty = \infty + a = \infty + \infty = \infty$ for all $a \in \Gamma$.
    }, such that the following conditions are satisfied for all $a, b \in F$:
    \begin{enumerate}[(i)]
        \item $v(a) = \infty$ if and only if $a = 0$,
        \item $v(ab) = v(a) + v(b)$,
        \item $v(a + b) \geq \min \{v(a), v(b)\}$, with equality if $v(a) \neq v(b)$.
    \end{enumerate}
    A valuation is called \emph{non-trivial} if $\Gamma \neq \{0\}$.
    A \emph{valued field} is a pair $(F, v)$ where $F$ is a field and $v$ is a valuation of $F$. 
    Its \emph{valuation ring} $A_v$ is defined as
    \[
        A_v \coloneqq \{a \in F \mid v(a) \geq 0\}.
    \]
    We have $A_v \neq F$ if and only if $v$ is non-trivial.
    $A_v$ is a ring with a unique maximal ideal
    \[
        M_v \coloneqq \{a \in F \mid v(a) > 0\}.
    \]  
The quotient $F_v \coloneqq A_v/M_v$ is called the \emph{residue field} of $(F, v)$.
It is indeed a field since $M_v$ is maximal.
A valuation $v$ is called a \emph{real place} of $F$ if the residue field $F_v$ is formally real.

Consider the field $F = \R(X)$.
The following proposition gives a well-known characterization (up to isomorphism of the value group $\Gamma$) of the set of all non-trivial real places $\R(X)$ whose valuation ring contains the subfield $\R$.
\begin{restatable}{prop}{propclass}\label{prop:class}
Let $v$ be a non-trivial real place of $\R(X)$ such that $\R \subseteq A_v$.
Then $v$ belongs to one of the two following types of real places:
\begin{enumerate}
    \item For every $t \in \R$ there is a real place $v_t \colon \R(X) \rightarrow \Z \cup \{\infty\}$, defined by $v_t(y) = a$, where $a \in \Z$ is such that $y$ can be written as $y = (X- t)^a \cdot \frac{f}{g}$, with $f, g$ being polynomials in $\R[X]$ not divisible by $X - t$.
    The residue field $\R(X)_{v_t}$ is isomorphic to $\R$ by the natural homomorphism $y + M_{v_t} \mapsto y(t)$.
    
    \item There is a real place $v_{\infty} \colon \R(X) \rightarrow \Z \cup \{\infty\}$, defined by $v_t(\frac{f}{g}) = \deg g - \deg f$, where $f, g$ are polynomials in $\R[X]$.
    The residue field $\R(X)_{v_{\infty}}$ is isomorphic to $\R$ by the natural homomorphism $y + M_{v_{\infty}} \mapsto \lim_{t \rightarrow \infty} y(t)$.
\end{enumerate}
\end{restatable}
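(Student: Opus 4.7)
The plan is to show that the valuation $v$ is entirely determined by the sign of $v(X)$, and then to argue case by case. The starting observation is that $v$ must be trivial on $\R$: for any $r \in \R \setminus \{0\}$, both $r$ and $r^{-1}$ lie in $\R \subseteq A_v$, forcing $v(r) = 0$. Consequently the structure of $v$ on $\R(X)$ is controlled by where $X$ sits with respect to $A_v$ and $M_v$, and I would split into two cases depending on whether $v(X) \geq 0$ or $v(X) < 0$.

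In the first case, $v(X) \geq 0$ gives $\R[X] \subseteq A_v$, so I would examine the contracted prime ideal $\mathfrak{p} \coloneqq M_v \cap \R[X]$. It is nonzero, otherwise $v$ would vanish on $\R[X] \setminus \{0\}$ and hence on all of $\R(X)^\times$, contradicting non-triviality. Since $\R[X]$ is a PID, $\mathfrak{p} = (p(X))$ for a monic irreducible $p$, and the injection $\R[X]/\mathfrak{p} \hookrightarrow F_v$ makes $F_v$ contain a field extension of $\R$ of degree $\deg p$. The real-place hypothesis forces $\deg p = 1$, because the only proper algebraic extension of $\R$ is $\C$, which is not formally real. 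Hence $\mathfrak{p} = (X - t)$ for some $t \in \R$, so $A_v$ is an overring of the DVR $\R[X]_{(X-t)}$ inside $\R(X)$. The only such overrings are $\R[X]_{(X-t)}$ itself and $\R(X)$, and non-triviality excludes the latter; thus $A_v = \R[X]_{(X-t)} = A_{v_t}$. Normalizing the value group to $\Z$ with $v(X-t) = 1$ yields $v = v_t$, and the residue map is evaluation at $t$ by construction.

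In the second case, $v(X) < 0$ means $Y \coloneqq 1/X$ satisfies $v(Y) > 0$. Since $\R(X) = \R(Y)$, applying the first case in the variable $Y$ shows $A_v = \R[Y]_{(Y)}$. Translating back to $X$ via a degree computation---writing $f \in \R[X]$ of degree $n$ as $X^n \widetilde{f}(1/X)$ with $\widetilde{f}(0) \neq 0$---gives $v(f/g) = \deg g - \deg f$ and identifies the residue map as $f/g \mapsto \lim_{t \to \infty} f(t)/g(t)$, matching $v_\infty$.

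The main technical step, and the only place where the real-place hypothesis is essential, is ruling out $\deg p = 2$ in the first case: without this hypothesis, irreducible quadratic contractions would yield a whole family of additional valuations with residue field $\C$. Everything else is a direct application of the classification of overrings of discrete valuation rings inside function fields of transcendence degree one.
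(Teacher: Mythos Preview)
Your proof is correct and follows essentially the same approach as the paper's: both argue that $v$ is trivial on $\R$, split on the sign of $v(X)$, contract $M_v$ to a nonzero prime of $\R[X]$ (respectively $\R[1/X]$), and use the real-place hypothesis to rule out irreducible quadratic generators via the non-formal-reality of $\C$. The only cosmetic difference is in the final identification: you conclude $A_v = \R[X]_{(X-t)}$ by invoking the maximality of a DVR among proper subrings of its fraction field, whereas the paper directly computes $v\bigl((X-t)^a f/g\bigr) = a\cdot v(X-t)$ after checking $v(f)=0$ for $f$ coprime to $X-t$; the underlying verification is the same.
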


Let $P$ be a semicone of a field $F$, and $F_0$ be a subfield of $F$.
Denote by $\leq$ the corresponding semiordering of $P$; define the set 
\begin{equation}\label{eq:defA}
    A_{F_0}^P \coloneqq \{a \in F \mid a \leq b \text{ and } -a \leq b \text{ for some } b \in F_0\}.
\end{equation}
The following lemmas show that $A_{F_0}^P$ is a valuation ring, and that its corresponding residue field admits a semiordering induced by $P$ under additional conditions.

\begin{lem}[{\cite[Lemma~7.13]{prestel2007lectures}}]\label{lem:713}
    Let $P$ be a semiordering of a field $F$ and $F_0$ a subfield of $F$.
    Then $A_{F_0}^P$ is a valuation ring of some valuation of $F$.
\end{lem}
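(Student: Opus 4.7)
The plan is to verify the two defining properties of a valuation ring of $F$: that $A := A_{F_0}^P$ is a subring of $F$ containing $1$, and that for every $a \in F^*$, at least one of $a, a^{-1}$ lies in $A$. The easy closure properties drop out from the translation axiom~(i) of the semiordering: if $|a_i| \leq b_i$ with $b_i \in F_0$ for $i = 1,2$, then $|a_1 \pm a_2| \leq b_1 + b_2 \in F_0$. The inclusion $F_0 \subseteq A$ is immediate since the restriction $P \cap F_0$ is a semicone of $F_0$, so each $c \in F_0$ satisfies $|c| \leq |c| \in F_0$.

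The delicate step is closure under multiplication. I will use the polarisation identity $4 a_1 a_2 = (a_1 + a_2)^2 - (a_1 - a_2)^2$, together with the observation that $A$ is closed under division by $4$ (multiply $|x| \leq c$ by the square $(1/2)^2$ via axiom~(iii)), to reduce the problem to showing $a \in A \implies a^2 \in A$. Given $|a| \leq b$ with $b \in F_0$, I replace $b$ by $b+1$ if necessary so that $b \geq 1$. Applying axiom~(iii) to $b - a \geq 0$ and to $b + a \geq 0$ with the respective square factors $(b+a)^2$ and $(b-a)^2$, and summing, yields $2b(b^2 - a^2) \in P$; since $2x \in P$ implies $x \in P$ (via additive closure of $P$ together with $P \cap -P = \{0\}$), we obtain $a^2 b \leq b^3$. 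A second application of axiom~(iii), to $b - 1 \geq 0$ with the square $a^2$, gives $a^2 \leq a^2 b$. Chaining these two bounds yields $a^2 \leq b^3 \in F_0$, hence $a^2 \in A$.

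For the valuation-ring property I take $a \in F^* \setminus A$; replacing $a$ by $-a$ if necessary, I may assume $a > 0$. The failure $a \notin A$ then forces $a > b$ for every positive $b \in F_0$, so in particular $a > 1$. I will show $a^{-1} \leq 1$ by contradiction: if $a^{-1} > 1$, then axiom~(iii) applied to $a^{-1} - 1 > 0$ with square $a^2$ gives $a - a^2 \in P$, and adding the relation $a - 1 \in P$ yields $-(a - 1)^2 \in P$. Since $(a - 1)^2 \in P$ is automatic (every square lies in $P$), we get $(a - 1)^2 \in P \cap -P = \{0\}$, contradicting $a > 1$. Combined with $a^{-1} > 0$ (axiom~(iii) applied to $a > 0$ with square $(a^{-1})^2$), this delivers $|a^{-1}| \leq 1 \in F_0$, so $a^{-1} \in A$.

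The principal obstacle throughout is the absence of the rule $P \cdot P \subseteq P$ in a general semiordering: a product of two positive elements need not be positive, so the usual monotonicity arguments from ordered-field theory collapse. The recurring device is to multiply a one-sided inequality only by a square, as is permitted by axiom~(iii), and then to add such inequalities so that the non-square factors either cancel or combine into expressions controllable by elements of $F_0$.
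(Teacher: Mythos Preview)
The paper does not supply its own proof of this lemma; it is quoted verbatim from Prestel's book as background. Your argument is correct and essentially the standard one: the additive closure of $A_{F_0}^P$ is immediate from translation-invariance, multiplicative closure is obtained by polarisation together with the bound $a^2 \leq b^3$ (derived exactly as you do, by multiplying $b \pm a \in P$ by the squares $(b \mp a)^2$ and summing), and the dichotomy $a \in A$ or $a^{-1} \in A$ follows from the neat contradiction $-(a-1)^2 \in P$ that you extract from $a > 1$ and $a^{-1} > 1$. The one device worth flagging explicitly, which you use twice, is that $2x \in P \implies x \in P$: this is not an axiom but follows because otherwise $-x \in P$, whence $-2x \in P$, forcing $2x \in P \cap -P = \{0\}$.
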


\begin{lem}[{\cite[Lemma~7.15]{prestel2007lectures}}]\label{lem:715}
    Let $P$ be a semiordering of a field $F$ and $F_0$ a subfield of $F$, such that there exists $b \in F$ with $a \leq b$ for all $a \in F_0$.
    Let the valuation $v$ of $F$ correspond to $A_{F_0}^P$.
    Then $(A_v \cap P) / M_v$ is a semiordering of $F_v$.
\end{lem}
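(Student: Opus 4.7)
The plan is to verify the four semicone axioms for $\bar{P} \coloneqq (A_v \cap P)/M_v$ inside $F_v = A_v/M_v$. Three of them — closure under addition, $F_v^2 \cdot \bar{P} \subseteq \bar{P}$ together with $\bar{1} \in \bar{P}$, and $\bar{P} \cup -\bar{P} = F_v$ — follow straightforwardly by lifting representatives from $F_v$ back to $A_v$ and invoking the corresponding properties of $P$ together with the fact that $A_v$ is a subring of $F$. The substantive axiom is $\bar{P} \cap -\bar{P} = \{\bar{0}\}$.

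Unwinding this last axiom, the statement to prove is the following convexity property: if $a, a' \in A_v \cap P$ satisfy $a + a' \in M_v$, then $a \in M_v$. Indeed, $\bar{a} \in \bar{P} \cap -\bar{P}$ means there exists $a' \in A_v \cap P$ with $-\bar{a} = \bar{a'}$, equivalently $a + a' \in M_v$, and the convexity statement then yields $\bar{a} = \bar{0}$.

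To establish the convexity property, I argue by contradiction: suppose $a \notin M_v$, so $v(a) = 0$ and $a^{-1} \in A_v$. From $a \in P$ we get $a^{-1} = a \cdot (a^{-1})^2 \in F^2 \cdot P \subseteq P$, and the defining property of $A_v = A_{F_0}^P$ supplies some $c_0 \in F_0^{>0}$ with $a^{-1} \leq c_0$. On the other hand, $(a+a')^{-1}$ lies in $P$ (if $a + a' = 0$ then $a \in P \cap -P = \{0\} \subseteq M_v$ already) and is unbounded by $F_0$, so $(a+a')^{-1} > c$ for every $c \in F_0^{>0}$; scaling by $(a+a')^2 \in F^2$ gives $c(a+a')^2 \leq a + a'$. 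Taking $c = c_0$ and combining with $a^{-1}(a+a')^2 \leq c_0(a+a')^2$ (obtained by scaling $a^{-1} \leq c_0$ by $(a+a')^2 \in F^2$) yields $a^{-1}(a+a')^2 \leq a + a'$; a further scaling by $a^2 \in F^2$ yields $a(a+a')^2 \leq a^2(a+a')$. Rearranging gives $-a a'(a+a') \in P$. But $a a'(a+a') = a^2 a' + a (a')^2 \in F^2 \cdot P + F^2 \cdot P \subseteq P$ as well, so $a a'(a+a') \in P \cap -P = \{0\}$. Hence one of $a$, $a'$, $a + a'$ vanishes; all three cases (using $P \cap -P = \{0\}$ once more in the third) force $a \in M_v$, contradicting the assumption.

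The main obstacle is the asymmetry between semicones and cones: in a semiordering one may scale an inequality only by an element of $F^2$, so familiar tricks such as inverting a positive inequality, or multiplying by an arbitrary non-negative element, are not available. The argument above is engineered to use only square multipliers built from $a$, $a^{-1}$, and $a+a'$, at the cost of introducing the auxiliary product $a a'(a+a')$, which can be recognized as an element of $P$ in two different ways.
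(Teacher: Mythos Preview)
The paper does not supply its own proof of this lemma; it is quoted verbatim from Prestel's book and used as a black box in the proof of Theorem~\ref{thm:locglobpos}. So there is nothing in the paper to compare against.

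Your argument is correct. The reductions of axioms (i), (ii), (iv) to the corresponding properties of $P$ are routine, as you say (for (ii) one should note $1\in A_v$, which follows since $1\in F_0$ witnesses $1\leq 1$ and $-1\leq 1$). For axiom (iii), your reformulation as the convexity statement ``$a,a'\in A_v\cap P$ and $a+a'\in M_v$ imply $a\in M_v$'' is exactly right, and the chain
\[
a^{-1}(a+a')^2 \;\leq\; c_0(a+a')^2 \;\leq\; a+a'
\]
followed by multiplication by $a^2$ is a clean way to manufacture $-aa'(a+a')\in P$ using only square scalings. Two small points worth making explicit: (1) the bound $c_0\in F_0$ obtained from $a^{-1}\in A_{F_0}^P$ automatically lies in $P$, since $c_0=(c_0-a^{-1})+a^{-1}\in P+P$; this is what lets you invoke ``$(a+a')^{-1}>c$ for all $c\in F_0\cap P$'' at $c=c_0$. (2) That last inequality holds because if $(a+a')^{-1}\leq c$ with $c\in F_0\cap P$, then also $-(a+a')^{-1}\leq 0\leq c$, forcing $(a+a')^{-1}\in A_{F_0}^P$, a contradiction.

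One observation: your proof never invokes the hypothesis that $F_0$ is bounded above by some $b\in F$. This is not a gap---the semiordering conclusion genuinely holds without it (when the hypothesis fails one can check $A_{F_0}^P=F$, and then $\bar P=P$ trivially). In Prestel's text the hypothesis is used to guarantee non-triviality of $v$, which is how the paper applies the lemma, but as stated here it is not needed for the bare conclusion.
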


\section{Local-global principle over strictly positive polynomials}\label{sec:pospoly}
For a subset $B$ of $\R$, define the set $\W(B)$ of polynomials that are non-negative on $B$:
\[
    \W(B) \coloneqq \{f \in \R[X] \mid f(x) \geq 0 \text{ for all } x \in B\}.
\]
Obviously $\U(B) \subseteq \W(B)$.
For $f, g \in \W(\R) \setminus \{0\}$, by the fundamental theorem of algebra, one can write (uniquely)
\[
    f = c \prod_{j \in J} (x - r_j)^{d_j} \prod_{k \in K} (x^2 + a_k x + b_k)^{e_k}, \quad g = c' \prod_{j \in J} (x - r_j)^{d'_j} \prod_{k \in K} (x^2 + a_k x + b_k)^{e'_k}
\]
where $c, c', r_j, a_k, b_k \in \R$ and $d_j, d'_j, e_k, e'_k$ are non-negative integers, and the polynomials $x^2 + a_k x + b_k$ have no real root.
Here, $J$ indexes all real roots of $f$ and $g$, and $K$ indexes all conjugate pairs of imaginary roots of $f$ and $g$.
Since $f$ and $g$ are non-negative on $\R$, all $d_j$ and $d'_j$ are even, and $c, c'$ are positive.
Therefore, the \emph{greatest common divisor} of $f$ and $g$, defined by
\[
    \gcd(f, g) \coloneqq \prod_{j \in J} (x - r_j)^{\min\{d_j, d'_j\}} \prod_{k \in K} (x^2 + a_k x + b_k)^{\min\{e_k, e'_k\}}
\]
is also non-negative on $\R$.
It follows that the polynomials $\gcd(f, g)$, $f/\gcd(f, g)$ and $g/\gcd(f, g)$ are all in $\W(\R)$.

We now give a proof of our first main result, which can be considered as a ``strictly positive'' version of the Br\"ocker-Prestel local-global principle.
A comparison of our proof with the proof of the original theorem is given in Appendix~\ref{app:compare}.
\locglobpos*

\begin{proof}
The theorem is trivially true if $B$ is empty, hence we suppose $B \neq \emptyset$.
Suppose $f_1 h_1 + \cdots + f_n h_n = 0$ has no solution $(f_1, \ldots, f_n)$ over $\U(B)$.
Consider the following subset of the field $\R(X)$:
\[
    P_0 \coloneqq \left\{\frac{g}{G} \cdot \sum_{i = 1}^n f_i h_i, \text{ where all } f_i \in \U(B) \text{ and } g, G \in \W(\R) \setminus \{0\} \right\}.
\]
Since $f_1 h_1 + \cdots + f_n h_n = 0$ has no solution $(f_1, \ldots, f_n)$ over $\U(B)$, we have $0 \not\in P_0$.
We claim that $P'_0 = P_0 \cup \{0\}$ is a pre-semicone of $\R(X)$.
Indeed, we verify the three conditions given in Definition~\ref{def:presemicone}:
\begin{enumerate}[(i)]
    \item $P'_0 + P'_0 \subseteq P'_0$.
    It suffices to show $P_0 + P_0 \subseteq P_0$.
    Let $c = \frac{g}{G} \cdot \sum_{i = 1}^n f_i h_i$, $c' = \frac{g'}{G'} \cdot \sum_{i = 1}^n f'_i h_i$ be elements of $P_0$.
    Without loss of generality we can suppose $\gcd(g, G) = \gcd(g', G') = 1$.
    Write $d \coloneqq \gcd(g, g')$, $D \coloneqq \gcd(G, G')$, then the polynomials $d, \frac{g}{d}, \frac{g'}{d}, D, \frac{G}{D}, \frac{G'}{D}$ are all elements of $\W(\R) \setminus \{0\}$, and $\gcd(\frac{gG'}{dD}, \frac{g'G}{dD}) = 1$.
    Hence,
    \begin{equation}\label{eq:add}
        c + c' = \sum_{i = 1}^n \left(f_i \frac{g}{G} + f'_i \frac{g'}{G'}\right) h_i = \frac{dD}{GG'}\sum_{i = 1}^n \left( f_i \frac{gG'}{dD} + f'_i \frac{g'G}{dD} \right) h_i
    \end{equation}
    For any $x \in B$, we have $\frac{gG'}{dD} (x) \geq 0$ and $\frac{g'G}{dD} (x) \geq 0$.
    Since $\gcd(\frac{gG'}{dD}, \frac{g'G}{dD}) = 1$, the two polynomials $\frac{gG'}{dD}, \frac{g'G}{dD}$ cannot both vanish at $x$.
    Therefore either $\frac{gG'}{dD}(x) > 0$ or $\frac{g'G}{dD} (x) > 0$.
    Because $f_i(x) > 0$ and $f'_i(x) > 0$, it follows that $\left(f_i \frac{gG'}{dD} + f'_i \frac{g'G}{dD}\right) (x) > 0$.
    So $f_i \frac{gG'}{dD} + f'_i \frac{g'G}{dD} \in \U(B)$, and $c + c' \in P_0$.
    \item $\R(X)^2 \cdot P'_0 \subseteq P'_0$. This is obvious since $\R[X]^2 \cdot \W(\R) \subseteq \W(\R)$.
    \item $P'_0 \cap - P'_0 = \{0\}$.
    It suffices to show $P_0 \cap - P_0 = \emptyset$.
    On the contrary suppose $c \in P_0 \cap - P_0$, then $0 = c + (-c) \in P_0 + P_0 \subseteq P_0$, a contradiction.
\end{enumerate}
By Lemma~\ref{lem:113}, $P'_0$ can be extended to some $P$ such that either $P$ or $-P$ is a semicone of the field $\R(X)$.
Without loss of generality suppose $P \supseteq P'_0$ is a semicone, otherwise we can replace all $h_i$ by $- h_i$.
Since the field $\R(X)$ has no archimedean ordering~\cite[Example~1.1.4(2)]{prestel2013positive}, the \emph{semiordering} corresponding to $P$ must be non-archimedean (otherwise by Lemma~\ref{lem:120} it must be an archimedean ordering).
Consider the subfield $\R$ of $\R(X)$, by Lemma~\ref{lem:713} the valuation ring $A_{\R}^P$ (as defined in \eqref{eq:defA}) corresponds to some valuation $v$ of $\R(X)$.
Since $P$ is non-archimedean, there exists some $a \in \R(X)$ such that $a - r \in P$ for all $r \in \R$, hence $A_{\R}^P \neq \R(X)$. 
Also, Lemma~\ref{lem:715} shows that the residue field $F_v$ admits a semiordering $(P \cap A_v)/M_v$.
By Lemma~\ref{lem:115}, $F_v$ is formally real.
Therefore, $v$ is a non-trivial real place of $\R(X)$, and from the definition of $A_{\R}^P$ we have $\R \subseteq A_{\R}^P = A_v$.

Using the classification of real places of $\R(X)$ given in Proposition~\ref{prop:class}, consider the following three cases.
Since $F_v$ is isomorphic to $\R$, the semiordering $(P \cap A_v)/M_v$ corresponds to the only ordering on $\R$.
\begin{enumerate}
    \item The real place $v$ is equivalent to a place $v_t$ for some $t \in \oB \subseteq \R$.
    In this case $\R[X] \subseteq A_v$.
    We show that $h_i(t) \geq 0$ for all $i$.
    By symmetry it suffices to show $h_1(t) \geq 0$.
    For every $\varepsilon \in \Rpp$, we have $\varepsilon \in \U(B)$, so $h_1 + \varepsilon(h_2 + \cdots + h_n) \in P_0 \subseteq P$.
    Since $h_1 + \varepsilon(h_2 + \cdots + h_n) \in \R[X] \subseteq A_v$,
    we have $h_1 + \varepsilon(h_2 + \cdots + h_n) \in P \cap A_v$, which gives
    \begin{equation}\label{eq:modMv}
        h_1 + \varepsilon(h_2 + \cdots + h_n) + M_v \in (P \cap A_v)/M_v.
    \end{equation}
    Since the residue field $\R(X)_{v}$ is isomorphic to $\R$ by the natural homomorphism $y + M_{v} \mapsto y(t)$, Equation~\eqref{eq:modMv} yields
    \[
        h_1(t) + \varepsilon(h_2(t) + \cdots + h_n(t)) \geq 0.
    \]
    Since this is true for all $\varepsilon > 0$, we conclude that $h_1(t) \geq 0$ and thus $h_i(t) \geq 0$ for all $i$.
    
    \item The real place $v$ is equivalent to a place $v_t$ for some $t \in \R \setminus \oB$.
    There exists a polynomial $H_B \in \R[X]$, such that $H_B(x) > 0$ for all $x \in B$ but $H_B(t) < 0$.
    Indeed, since $t \not\in \oB$, there exists an interval $(t - \delta, t + \delta)$ disjoint from $B$; it then suffices to take $H_B \coloneqq (X - t)^2 - \delta^2$.
    
    As in the previous case, we have $h_1(t) \geq 0$.
    Furthermore, since $H_B \in \U(B)$ by its definition, we have $H_B h_1 + \varepsilon(h_2 + \cdots + h_n) \in P_0 \subseteq P$ for all $\varepsilon \in \Rpp$.
    This yields $(H_B h_1)(t) \geq 0$.
    However, we have $H_B(t) < 0$ by its definition.
    This together with $h_1(t) \geq 0$ yields $h_1(t) = 0$. 
    By symmetry we can prove $h_i(t) = 0$ for all $i$, this contradicts the condition $\gcd(h_1, \ldots, h_n) = 1$.

    \item The real place $v$ is equivalent to the place $v_{\infty}$.
    We divide $\{h_1, \ldots, h_n\}$ into two parts according to the parity of its degree.
    Without loss of generality, suppose $h_1, \ldots, h_k$ have even degree, and $h_{k+1}, \ldots, h_n$ have odd degree.
    
    Define the \emph{leading coefficient} of a polynomial as the coefficient of its highest degree monomial.
    First we claim that the leading coefficients of $h_1, \ldots, h_k$ are all positive.
    By symmetry, we only prove positivity of the leading coefficients of $h_1$.
    
    Let $m = \max\{\deg h_1, \ldots, \deg h_n\} + 1$.
    Since $(X^2+1)^m \in \U(B)$ and $X^{\deg h_1} \in \W(\R)$, we have 
    \begin{equation}\label{eq:modMvinf}
        \frac{h_1}{X^{\deg h_1}} + \frac{(X^2+1)^m}{(X^2+1)^{2m}}(h_2 + \cdots + h_n) + M_v \in (P \cap A_v)/M_v.
    \end{equation}
    Since the residue field $\R(X)_{v}$ is isomorphic to $\R$ by the natural homomorphism $y + M_{v_t} \mapsto \lim_{t \rightarrow \infty} y(t)$, Equation~\eqref{eq:modMvinf} shows that the leading coefficient of $h_1$ is positive. 
    Therefore by symmetry, the leading coefficient of $h_i$ is positive for all $1 \leq i \leq k$.
    
    We then separate four cases.
    \begin{enumerate}
        \item If $B$ is bounded, that is, $B \subset (a, b)$ for some $a, b \in \R$.
        Let $s > \max\{|a|, |b|\}$, then $X + s \in \U(B)$.
        Since $\deg h_n$ is odd, we have $X^{\deg h_n + 1} \in \W(\R)$.
        Therefore,
        \begin{equation}\label{eq:modMvinf1}
        \frac{(X+s) h_n}{X^{\deg h_n + 1}} + \frac{(X^2+1)^m}{(X^2+1)^{2m}}(h_1 + \cdots + h_{n-1}) + M_v \in (P \cap A_v)/M_v.
        \end{equation}
        This shows that the leading coefficient of $h_n$ is positive.
        
        However, we also have $- X + s \in \U(B)$, so we can 
        replace $(X+s)$ with $(- X + s)$ in Equation~\eqref{eq:modMvinf1}.
        This shows that the leading coefficient of $h_n$ is negative.
        Therefore $h_n$ does not exist, so all $h_1, \ldots, h_n$ must have even degree.
        But then $(X + 1 - a)(b + 1 - X) \in \U(B)$, so
        \begin{equation}\label{eq:modMvinf1bisbis}
        \frac{(X + 1 - a)(b + 1 - X) h_1}{X^{\deg h_1 + 2}} + \frac{(X^2+1)^m}{(X^2+1)^{2m}}(h_2 + \cdots + h_n) + M_v \in (P \cap A_v)/M_v.
        \end{equation}
        This shows that the leading coefficient of $h_1$ is negative, a contradiction.
        
        \item If $B \subset (a, +\infty)$ for some $a \in \R$, and $B$ contains arbitrary large positive reals, that is, $B \cap (b, +\infty) \neq \emptyset$ for all $b \in \R$.
        Then $X + 1 - a \in \U(B)$, so
        \begin{equation}\label{eq:modMvinf2}
        \frac{(X + 1 - a) h_n}{X^{\deg h_n + 1}} + \frac{(X^2+1)^m}{(X^2+1)^{2m}}(h_1 + \cdots + h_{n-1}) + M_v \in (P \cap A_v)/M_v.
        \end{equation}
        This shows that the leading coefficient of $h_n$ is positive.
        By symmetry, the leading coefficients of $h_{k+1}, \ldots, h_n$ are all positive.
        Therefore, for large enough $t \in B$, $h_1(t), \ldots, h_n(t)$ are all positive.
        
        \item If $B \subset (-\infty, a)$ for some $a \in \R$, and $B$ contains arbitrary small reals, that is, $B \cap (-\infty, b) \neq \emptyset$ for all $b \in \R$.
        Then $a + 1 - X \in \U(B)$, so
        \begin{equation}\label{eq:modMvinf3}
        \frac{(a + 1 - X) h_n}{X^{\deg h_n + 1}} + \frac{(X^2+1)^m}{(X^2+1)^{2m}}(h_1 + \cdots + h_{n-1}) + M_v \in (P \cap A_v)/M_v.
        \end{equation}
        This shows that the leading coefficient of $h_n$ is negative.
        By symmetry, the leading coefficients of $h_{k+1}, \ldots, h_n$ are all negative.
        Therefore, for small enough $0 > t \in B$, $h_1(t), \ldots, h_n(t)$ are all positive.
        
        \item If $B$ contains arbitrary large and arbitrary small reals.
        We claim that the leading coefficients of $h_{k+1}, \ldots, h_n$ all have the same sign.
        Suppose on the contrary that they have different signs, denote by $a_i$ the leading coefficient of $h_i$, so $h_i = a_i X^{\deg h_i} + H_i$ for some polynomial $H_i$ of degree at most $\deg h_i - 1$.
        Then there exist \emph{strictly positive} reals $r_{k+1}, \ldots, r_n$ such that $r_{k+1} a_{k+1} + \cdots + r_n a_{n} = 0$.
        Then, for any $s \in \R$, we have $X^2 - 2sX + s^2 + 1 \in \U(B)$, so
        \begin{multline}\label{eq:modMvinf4}
         \frac{(X^2+1)^m}{(X^2+1)^{2m}}(h_1 + \cdots + h_{k}) + \frac{r_{k+1}(X^2 - 2sX + s^2 + 1)}{X^{\deg h_{k+1} + 1}} h_{k+1} \\
         + \frac{r_{k+2}}{X^{\deg h_{k+2} - 1}} h_{k+2} + \cdots + \frac{r_{n}}{X^{\deg h_{n} - 1}} h_{n} + M_v \in (P \cap A_v)/M_v.
        \end{multline}
        The limit of the left hand side when $X$ tends to infinity is equal to
        \begin{align*}
        g(s) & \coloneqq \lim_{X \rightarrow \infty}\left(\frac{r_{k+1}(X^2 - 2sX)}{X^{\deg h_{k+1} + 1}} h_{k+1}(X) + \sum_{j = k+2}^n \frac{r_{j}}{X^{\deg h_{j} - 1}} h_{j}(X)\right) \\
        & = -2s a_{k+1} r_{k+1} + \lim_{X \rightarrow \infty}\left(\sum_{j = k+1}^n \frac{r_{j}}{X^{\deg h_{j} - 1}} H_{j}(X)\right)
        \end{align*}
        because $r_{k+1} a_{k+1} + \cdots + r_n a_{n} = 0$.
        According to whether $a_{k+1} r_{k+1}$ is positive or negative, we can take a positive or negative $s$ with large enough absolute value, so that the value of $g(s)$ is negative.
        This contradicts Equation~\eqref{eq:modMvinf4}, which shows that the limit of the left hand side when $X \rightarrow \infty$ is positive.
        
        We therefore conclude that the leading coefficients of $h_{k+1}, \ldots, h_n$ all have the same sign.
        If they are positive, then for large enough $t \in B$, $h_1(t), \ldots, h_n(t)$ are all positive.
        If they are negative, then for small enough $t \in B$, $h_1(t), \ldots, h_n(t)$ are all positive.
    \end{enumerate}
\end{enumerate}
To sum up, in all possible cases, we have $t \in \oB$ with $h_i(t) \geq 0$ for all $i$.
If $-P$ is a semicone instead of $P$, analogously we can find $t \in \oB$ such that $h_i(t) \leq 0$ for all $i$.
\end{proof}

\section{Local-global principle over $\N[X]$}\label{sec:locglob}
In this section we prove Theorem~\ref{thm:mainlocglob}.
Omitted proofs are given in Appendix~\ref{app:proofs}.
The key to bridging the difference between the semirings $\U(B)$ and $\N[X]$ is P\'{o}lya's Theorem:

\begin{lem}[P\'{o}lya's Theorem~{\cite[Theorem~56]{hardy1952g}}]\label{lem:polya}
    If a homogeneous polynomial $f \in \R[X_1, \ldots, X_n]$ is strictly positive for all $(X_1, \ldots, X_n)$ on $\left( \Rp \right)^n \setminus \{0\}$,
    then there exists $p \in \N$ such that
    $
        (X_1 + \cdots + X_n)^p \cdot f \in \Rp[X_1, \ldots, X_n].
    $
\end{lem}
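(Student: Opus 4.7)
The plan is to follow the classical compactness-plus-combinatorial-estimate proof of Pólya's Theorem. First I would \emph{renormalize to the simplex}. Since $f$ is homogeneous, say of degree $d$, strict positivity on $\Rp^n\setminus\{0\}$ is equivalent to strict positivity on the standard simplex $\Delta_n = \{x\in\Rp^n : x_1+\cdots+x_n = 1\}$. Because $\Delta_n$ is compact and $f$ is continuous, there is $\epsilon>0$ with $f(x)\geq \epsilon$ for every $x\in\Delta_n$.

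Next I would compute the coefficients of $(X_1+\cdots+X_n)^p\cdot f$ explicitly. Writing $f=\sum_{|\alpha|=d} a_\alpha X^\alpha$ and expanding the $p$-th power by the multinomial theorem, the coefficient of $X^\beta$ with $|\beta|=p+d$ is
\[
    C_\beta \;=\; \sum_{\alpha\leq\beta,\,|\alpha|=d} a_\alpha \,\frac{p!}{(\beta-\alpha)!}
    \;=\; \frac{p!}{\beta!}\sum_{\alpha\leq\beta,\,|\alpha|=d} a_\alpha\,\prod_{i=1}^n \beta_i^{\underline{\alpha_i}},
\]
where $\beta_i^{\underline{\alpha_i}} = \beta_i(\beta_i-1)\cdots(\beta_i-\alpha_i+1)$ is the falling factorial. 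The core estimate is that $\beta_i^{\underline{\alpha_i}}$ differs from $\beta_i^{\alpha_i}$ only in lower-order terms in $\beta$: setting $x=\beta/(p+d)\in\Delta_n$, one obtains the uniform expansion
\[
    \frac{\beta!\,C_\beta}{p!\,(p+d)^d} \;=\; f(x)\;+\;O\!\left(\tfrac{1}{p+d}\right),
\]
where the error is uniform over all $\beta$ with $|\beta|=p+d$. Since $f(x)\geq\epsilon$, for all $p$ exceeding an explicit threshold one concludes $C_\beta>0$ for every such $\beta$.

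The main obstacle is the boundary of the simplex: when some coordinates $\beta_i$ vanish (or are very small compared to $p+d$), the point $x=\beta/(p+d)$ lies on a proper face of $\Delta_n$, and the terms with $\alpha_i>0$ drop out of the sum, so the approximation argument degenerates. The clean fix is to work face-by-face, using the hypothesis: for any subset $I\subseteq\{1,\ldots,n\}$, the restriction $f|_{X_i=0,\,i\in I}$ is again a homogeneous polynomial strictly positive on the positive orthant of the complementary variables (by the hypothesis that $f$ is positive on \emph{all} of $\Rp^n\setminus\{0\}$, including the faces). One then partitions the set of multi-indices $\beta$ with $|\beta|=p+d$ according to which coordinates are ``small'' (say, $\beta_i<\sqrt{p}$) versus ``large'', sets the small coordinates to $0$, and applies the uniform estimate to the face-restricted polynomial. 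An induction on the number of variables, with the one-variable case being trivial (since a positive homogeneous polynomial in a single variable is just a positive multiple of $X_1^d$), then closes the argument.

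Putting the face-wise estimates together, all coefficients $C_\beta$ of $(X_1+\cdots+X_n)^p f$ are non-negative once $p$ exceeds the maximum of the finitely many thresholds produced on each face, which is exactly the statement of the lemma.
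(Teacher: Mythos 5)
The paper does not prove this lemma at all: it is quoted from Hardy--Littlewood--P\'olya (Theorem~56), so there is no internal proof to compare against. Your proposal is essentially the classical proof, and its first half is already a complete argument: homogeneity reduces positivity to the compact simplex $\Delta_n$, where $f \geq \epsilon > 0$, and the exact identity
\[
\frac{\beta!\,C_\beta}{p!\,(p+d)^d} \;=\; \sum_{|\alpha|=d} a_\alpha \prod_{i=1}^n \prod_{j=0}^{\alpha_i-1}\Bigl(x_i - \tfrac{j}{p+d}\Bigr), \qquad x=\beta/(p+d),
\]
has right-hand side converging to $f(x)$ uniformly on the \emph{closed} simplex (each of the $d$ factors is perturbed by at most $d/(p+d)$ and all factors are bounded by $1$), so $C_\beta>0$ for every $\beta$ once $p$ is large.

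Where you go astray is in the diagnosis of a boundary obstacle. When some $\beta_i=0$ (or is small), the terms with $\alpha_i>0$ do drop out of the sum, but this is matched exactly by the vanishing (or smallness) of the corresponding monomials of $f$ at $x=\beta/(p+d)$; the displayed identity and the $O(1/(p+d))$ bound you yourself asserted remain valid there, and the hypothesis gives $f(x)\geq\epsilon$ on all of $\Delta_n$ including its faces. Hence no degeneration occurs and the face-by-face induction is unnecessary. It is also the only genuinely unproved part of your write-up: setting the ``small'' coordinates to zero and invoking the face-restricted polynomial does not by itself control the cross terms with $0<\beta_i<\sqrt{p}$ and $\alpha_i>0$, so as sketched that step would need an additional estimate. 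Deleting that detour and keeping the uniform estimate yields a correct and shorter proof, which is precisely the classical one behind the citation.
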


The following proposition reduces Theorem~\ref{thm:mainlocglob} to real polynomials.
\begin{restatable}{prop}{propNtoR}\label{prop:NtoR}
    Given $h_1, \ldots, h_n \in \Z[X]$.
    The equation $f_1 h_1 + \cdots + f_n h_n = 0$ has a solution $(f_1, \ldots, f_n)$ over $\N[X] \setminus \{0\}$ if and only if it has a solution over $\Rp[X] \setminus \{0\}$.
\end{restatable}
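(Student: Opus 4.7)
The forward implication $\N[X] \setminus \{0\} \Rightarrow \Rp[X] \setminus \{0\}$ is trivial since $\N \subseteq \Rp$. For the converse, my plan is to encode a solution of bounded degree as a point in a rational polyhedral cone, and then use the integrality of generators (Minkowski-Weyl) to carve out an integer point whose individual polynomial components are all non-zero.

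Concretely, I would start with a real solution $(f_1, \ldots, f_n) \in (\Rp[X] \setminus \{0\})^n$, set $D = \max_i \deg f_i$, and identify each $f_i$ with its vector of coefficients, writing $f_i = \sum_{j=0}^{D} c_{i,j} X^j$. The equation $\sum_i f_i h_i = 0$ then translates into a homogeneous linear system $Mc = 0$ on $c = (c_{i,j}) \in \R^{n(D+1)}$, where $M$ is a matrix with integer entries coming from the coefficients of $h_i \in \Z[X]$. Together with the non-negativity constraints $c_{i,j} \geq 0$, this defines a rational polyhedral cone
\[
    C \coloneqq \{c \in \R^{n(D+1)} \mid Mc = 0,\; c \geq 0\},
\]
which by assumption contains the coefficient vector of $(f_1, \ldots, f_n)$.

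By the Minkowski-Weyl theorem applied to rational cones, $C$ is finitely generated as a conic hull by integer vectors $v_1, \ldots, v_N \in \N^{n(D+1)}$; that is, $C = \{\sum_k \alpha_k v_k \mid \alpha_k \geq 0\}$. Fixing such a representation $c = \sum_k \alpha_k v_k$ of our solution, for each $i$ the condition $f_i \neq 0$ forces some $c_{i,j} > 0$, and in turn some generator $v_k$ appearing with $\alpha_k > 0$ whose $i$-th polynomial block is non-zero; pick one such index and call it $k_i$. Then $v \coloneqq v_{k_1} + \cdots + v_{k_n}$ lies in $C \cap \N^{n(D+1)}$, and because all entries of the summands are non-negative integers, the $i$-th polynomial block of $v$ dominates that of $v_{k_i}$ coefficient-wise and is therefore non-zero. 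Reading $v$ back as an $n$-tuple of polynomials produces the desired solution in $(\N[X] \setminus \{0\})^n$.

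The only non-routine ingredient is the integrality of generators of a rational polyhedral cone (Minkowski-Weyl, or equivalently the integer version of Farkas' lemma), which is standard from polyhedral combinatorics; the rest is elementary linear algebra, and the subtlety of separating the ``non-zero'' conditions on the individual $f_i$'s is handled cleanly once one has integer extreme rays. I do not see a direct role for P\'olya's theorem in this particular reduction; I expect it to be used at a different step of the paper, namely to pass between solutions over $\U(B)$ and solutions over $\Rp[X] \setminus \{0\}$, where multiplying by a high power of $(1+X)$ turns a strictly positive polynomial into one with non-negative coefficients.
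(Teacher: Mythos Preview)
Your argument is correct. Both your proof and the paper's reduce the question to finding an integer point in a rational polyhedral object, but the executions differ slightly. The paper writes out the same linear system $Mc=0$, $c\geq 0$ together with the disjunctive constraints ``for each $i$, some $c_{i,j}\neq 0$'', observes that this is a boolean combination of \emph{homogeneous} linear (in)equalities with integer coefficients, and then invokes in one line the general principle that such a system has a real solution iff it has a rational one, hence (by scaling) an integer one. You instead make the polyhedral step explicit via Minkowski--Weyl: take integer generators of the cone $\{Mc=0,\,c\geq 0\}$, and for each $i$ pick a generator appearing with positive weight whose $i$-th block is non-zero, then sum them. Your route is a bit more constructive and handles the per-$f_i$ non-vanishing condition without any case split, whereas the paper's one-liner is shorter but leans on a blanket ``$\R\Rightarrow\Q\Rightarrow\Z$ for homogeneous rational semilinear sets'' fact. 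Your remark about P\'olya is also accurate: in the paper it is used only in the next step (passing from $\U(\Rpp)$ to $\Rp[X]\setminus\{0\}$), not here.
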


The next proposition further reduces it to $\U(\Rpp)$.
The key to its proof is Lemma~\ref{lem:polya}.
\begin{restatable}{prop}{propRtoU}\label{prop:RtoU}
    Given $h_1, \ldots, h_n \in \Z[X]$.
    The equation $f_1 h_1 + \cdots + f_n h_n = 0$ has a solution $(f_1, \ldots, f_n)$ over $\Rp[X] \setminus \{0\}$ if and only if it has a solution over $\U(\Rpp)$.
\end{restatable}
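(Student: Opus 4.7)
The plan is to handle the two directions separately, with the forward direction being essentially immediate and the converse relying on P\'olya's theorem (Lemma~\ref{lem:polya}). For the direction ``$\Rp[X] \setminus \{0\}$ solution $\Rightarrow$ $\U(\Rpp)$ solution'', I would simply observe the inclusion $\Rp[X] \setminus \{0\} \subseteq \U(\Rpp)$: any nonzero polynomial $f = \sum a_i X^i$ with non-negative coefficients has some $a_k > 0$, and then $f(x) \geq a_k x^k > 0$ for every $x \in \Rpp$. Thus a solution over $\Rp[X] \setminus \{0\}$ is already a solution over $\U(\Rpp)$.

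For the converse direction, the strategy is to find a single multiplier $g \in \Rp[X] \setminus \{0\}$ such that $g f_i \in \Rp[X] \setminus \{0\}$ for every $i$; multiplying $\sum f_i h_i = 0$ by $g$ will then yield the desired solution. The natural candidate is $g = (X+1)^p$ for $p$ large, and the lever that produces $p$ is P\'olya's theorem. Given $f_i \in \U(\Rpp)$, I would first factor out the largest power of $X$ dividing it, writing $f_i = X^{k_i} \tilde{f}_i$ with $\tilde{f}_i(0) \neq 0$. Since $\tilde{f}_i > 0$ on $\Rpp$, continuity as $x \to 0^+$ forces $\tilde{f}_i(0) > 0$, and the asymptotic behavior as $x \to +\infty$ forces the leading coefficient of $\tilde{f}_i$ to be strictly positive.

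Next, I would consider the bivariate homogenization $F_i(X,Y) \coloneqq Y^{\deg \tilde{f}_i} \tilde{f}_i(X/Y)$, and verify that $F_i > 0$ on $(\Rp)^2 \setminus \{0\}$ by splitting into three cases: the open quadrant $X,Y > 0$ (immediate from $X/Y \in \Rpp$), the axis $Y = 0$, $X > 0$ (uses positivity of the leading coefficient), and the axis $X = 0$, $Y > 0$ (uses positivity of $\tilde{f}_i(0)$). P\'olya's theorem then yields $p_i \in \N$ with $(X+Y)^{p_i} F_i(X,Y) \in \Rp[X,Y]$; setting $Y = 1$ gives $(X+1)^{p_i} \tilde{f}_i \in \Rp[X]$, and multiplying by $X^{k_i}$ gives $(X+1)^{p_i} f_i \in \Rp[X]$.

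Finally, I would set $p \coloneqq \max_i p_i$ and note that, since $\Rp[X]$ is closed under multiplication, each $(X+1)^p f_i = (X+1)^{p - p_i} \cdot (X+1)^{p_i} f_i$ lies in $\Rp[X]$, and is nonzero because $f_i \neq 0$. Multiplying the original equation by $(X+1)^p$ then produces a solution over $\Rp[X] \setminus \{0\}$. The main obstacle, though minor, is arranging the input for P\'olya's theorem correctly: one must know that the homogenization $F_i$ is strictly positive on the full closed first quadrant minus the origin (not merely on the open quadrant), which is precisely why the preliminary factoring $f_i = X^{k_i} \tilde{f}_i$ is needed to guarantee $\tilde{f}_i(0) > 0$.
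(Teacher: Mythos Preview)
Your proposal is correct and follows essentially the same route as the paper's proof: factor out the maximal power of $X$ from each $f_i$, verify strict positivity of the quotient on $\Rp$ (via continuity at $0$ and positivity of the leading coefficient), homogenize, apply P\'olya's theorem, specialize $Y=1$, and take $p = \max_i p_i$. The only cosmetic difference is that the paper handles the axis $X=0$, $Y>0$ together with the open quadrant (since $x/y \geq 0$ covers $x=0$), whereas you split it off as a third case.
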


This justifies the need for a ``strictly positive'' version of the Br\"ocker-Prestel principle, since Proposition~\ref{prop:RtoU} no longer holds if we replace $\U(\Rpp)$ with $\W(\Rpp) \setminus \{0\}$ (see Remark~\ref{rmk:UvsW}).

We now prove the local-global principle for homogeneous linear equations over $\N[X]$.
\mainlocglob*
\begin{proof}
    Suppose the equation $f_1 h_1 + \cdots + f_n h_n = 0$ has no solution $(f_1, \ldots, f_n)$ over $\N[X] \setminus \{0\}$.
    By Proposition~\ref{prop:NtoR} and \ref{prop:RtoU}, it has no solution over $\U(\Rpp)$.
    Hence, by Theorem~\ref{thm:locglobpos}, there exists a real number $t \in \overline{\Rpp} = \Rp$ such that $h_i(t)$ are all non-negative or all non-positive.
\end{proof}

\section{Decidability}\label{sec:dec}
In this section we show our main decidability result.
\decidability*
\begin{proof}
    (A summary of the algorithm constructed in this proof is given in Appendix~\ref{app:alg}.)
    
    By the homogeneity of the linear equation, we can divide $h_1, \ldots, h_n$ by their greatest common divisor and suppose $\gcd(h_1, \ldots, h_n) = 1$.
    Computing the greatest common divisor can be done in polynomial time using the Euclidean algorithm.
    
    We then show that we can simplify the equation so that $h_1, \ldots, h_n$ satisfy
    \begin{equation}\label{eq:condpm}
        h_i(0) > 0, h_j(0) < 0, \quad \text{ for some } i, j.
    \end{equation}
    Suppose this is not already the case, that $h_i(0) \geq 0$ for all $i$ or $h_i(0) \leq 0$ for all $i$.
    Without loss of generality suppose $h_i(0) \geq 0$ for all $i$.
    We write $h_1(0) = 0, \ldots, h_k(0) = 0, h_{k+1}(0) > 0, \ldots, h_n(0) > 0$.
    Then $X \mid h_i$ for $i = 1, \ldots, k$.
    
    If $k = 0$, that is $h_i(0) > 0$ for all $i$, then $f_1 h_1 + \cdots + f_n h_n = 0$ has no solution over $\N[X] \setminus \{0\}$.
    Indeed, suppose on the contrary that $(f_1, \ldots, f_n)$ is such a solution.
    Dividing all $f_i$ by a suitable power of $X$ we can suppose $f_s(0) \neq 0$ for some $s$.
    Then $f_i(0) \geq 0$ for all $i$ while $f_s(0) > 0$, which yields $f_1(0) h_1(0) + \cdots + f_n(0) h_n(0) > 0$, a contradiction.

    If $k \geq 1$, we show that the equation
    \begin{equation}\label{eq:lin}
        f_1 h_1 + \cdots + f_n h_n = 0
    \end{equation}
    has a solution over $\N[X] \setminus \{0\}$ if and only if the equation
    \begin{equation}\label{eq:linred}
        f_1 \cdot \frac{h_1}{X} + \cdots + f_{k} \cdot \frac{h_k}{X} + f_{k+1} h_{k+1} + \cdots + f_n h_n = 0
    \end{equation}
    has a solution over $\N[X] \setminus \{0\}$.
    Let $(f_1, \ldots, f_n)$ be a solution over $\N[X] \setminus \{0\}$ of Equation~\eqref{eq:lin}, then $f_1(0) h_1(0) + \cdots + f_n(0) h_n(0) = 0$.
    Since $h_i(0) = 0$ for all $i = 1, \ldots, k$, $h_i(0) > 0$ for $i = k+1, \ldots, n$ and $f_i(0) \geq 0$ for $i = 1, \ldots, n$, we must have $f_{k+1}(0) = 0, \ldots, f_n(0) = 0$.
    That is, $X \mid f_{k+1}, \ldots, X \mid f_{n}$.
    Therefore $(f_1, \ldots, f_k, f_{k+1}/X, \ldots, f_n/X)$ is a solution over $\N[X] \setminus \{0\}$ of Equation~\eqref{eq:linred}.
    This shows that we can divide $h_1, \ldots, h_k$ by $X$ without changing the existence of solutions of Equation~\eqref{eq:lin}.
    Repeating this division process, one eventually terminates by obtaining $h_i$ such that either: $h_i(0)$ are all strictly positive or all strictly negative, in which case Equation~\eqref{eq:lin} has no solution over $\N[X] \setminus \{0\}$; or $h_i(0) > 0$ and $h_j(0) < 0$ for some $i, j$, in which case we have achieved the desired simplification to Condition~\eqref{eq:condpm}.
    This procedure is repeated at most $\deg h_1 + \cdots + \deg h_n$ times, and therefore terminates in polynomial time.

    Supposing Condition~\eqref{eq:condpm}, we claim that $f_1 h_1 + \cdots + f_n h_n = 0$ has no solution over $\N[X] \setminus \{0\}$ if and only if there exists $t \in \Rp$ such that $h_i(t)$ are all non-positive or all non-negative.
    The first implication is given by Theorem~\ref{thm:mainlocglob}.
    Conversely, suppose $h_i(t)$ are all non-positive or all non-negative. 
    Without loss of generality suppose $h_i(t) \geq 0$ for all $i$.
    By Condition~\eqref{eq:condpm}, we have $t \neq 0$.
    Suppose on the contrary that $(f_1, \ldots, f_n)$ is a solution over $\N[X] \setminus \{0\}$, then $f_i(t) > 0$ for all $i$ since $t > 0$.
    Since $\gcd(h_1, \ldots, h_n) = 1$, at least one of $h_i(t)$ must be non-zero.
    Since $h_i(t) \geq 0$ for all $i$, we have $f_1(t) h_1(t) + \cdots + f_n(t) h_n(t) > 0$, a contradiction.

    Thus, it suffices to decide whether there exists $t \geq 0$ such that $h_i(t)$ are all non-positive or all non-negative.
    This can be expressed in the existential theory of the reals:
    \begin{equation}\label{eq:sentreal}
        \exists X \, \left(X \geq 0 \land h_1(X) \geq 0 \land \cdots \land h_n(X) \geq 0 \right) \lor \left(X \geq 0 \land h_1(X) \leq 0 \land \cdots \land h_n(X) \leq 0 \right).
    \end{equation}
    Deciding the existential theory of the reals \emph{in one variable} can be done in polynomial time with respect to the total bit length used to encode the sentence, due to a classic result by Collins\footnote{The algorithm by Collins~\cite{collins1975quantifier} has complexity $L^3(nd)^{2^{O(K)}}$, where $L$ is the total coefficient bit length, $n$ the number of polynomials, $d$ the total degree of the polynomials, and $K$ the number of variables. In the one variable case, $K=1$, the algorithm takes polynomial time with respect to the total bit length.}~\cite{collins1975quantifier}.
    Therefore, one can decide the correctness of the sentence~\eqref{eq:sentreal} in polynomial time.
    Combining all the steps, we conclude that the total complexity is in PTIME.
\end{proof}

\section{Application to wreath product}\label{sec:appwr}
In this section we show the following result on wreath products.
\wreathdec*

Let $\varphi$ be the isomorphism defined in~\eqref{eq:defphi}.
Fix a finite set of elements $\mG$ as in Theorem~\ref{thm:wreathdec}.
For $i = 1, \ldots, n$, denote by $H_i \in \Z[X, X^{-1}]$ the Laurent polynomial in the upper-right entry of the image of $\varphi((y_i, b_i))$.
Write $\mG = \mG_+ \cup \mG_-$ where $\mG_+ \coloneqq \{(y_i, b_i) \in \mG \mid b_i = 1\}$ and $\mG_- \coloneqq \{(y_j, b_j) \in \mG \mid b_j = -1\}$.
Let $\varphi(\mG), \varphi(\mG_+), \varphi(\mG_-)$ be the set of matrices that are images under $\varphi$ of elements in $\mG, \mG_+, \mG_-$.
Define the sets of indices 
\[
    I \coloneqq \{i \mid b_i = 1\}, \quad J \coloneqq \{i \mid b_i = -1\}.
\]
For simplicity, we write $A_i, i \in I$ for the matrices in $\varphi(\mG_+)$, and $B_j, j \in J$ the matrices in $\varphi(\mG_-)$.
For every tuple $(i, j) \in I \times J$, define the Laurent polynomial
\begin{equation}\label{eq:defhij}
    h_{ij} \coloneqq X^{-1} H_i + H_j \in \Z[X, X^{-1}].
\end{equation}
This is the upper-right entry of the matrix $A_i B_j$.

For a subset $S \subseteq I \times J$, denote by $\pi_I(S)$ its projection onto the $I$ coordinates, that is, $\pi_I(S) \coloneqq \{i \in I \mid \exists j \in J, (i, j) \in S\}$.
Define $\pi_J(S)$ likewise.
The key to proving the partial decidability of the Group Problem in $\Z \wr \Z$ is the following proposition that relates sub-semigroups of $\Z \wr \Z$ to equations over $\N[X] \setminus \{0\}$.

\begin{prop}\label{prop:wrtopos}
    Given a set $\mG = \mG_+ \cup \mG_-$ of generators defined as above.
    Let $h_{ij} \in \Z[X, X^{-1}]$ be the polynomials defined in \eqref{eq:defhij}.
    The semigroup $\langle \mG \rangle$ is a group if and only if there exists a set $S \subseteq I \times J$ satisfying $\pi_I(S) = I, \pi_J(S) = J$, such that the equation $\sum_{(i, j) \in S} f_{ij} h_{ij} = 0$ has a solution $(f_{ij})_{(i, j) \in S}$ over $\N[X] \setminus \{0\}$.
\end{prop}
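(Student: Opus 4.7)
The plan is to pass through the following combinatorial characterization: $\langle \mG \rangle$ is a group if and only if there exists a word $W = M_1 \cdots M_K$ in $\varphi(\mG)$ with $W = I$ in which every $A_i$ ($i \in I$) and every $B_j$ ($j \in J$) occurs at least once. If $\langle \mG \rangle$ is a group, for each $g \in \varphi(\mG)$ pick a word $u_g$ representing $g^{-1}$ so $g u_g = I$, and concatenate these over all $g$. Conversely, if such a $W$ exists then for any occurrence of $g = M_k$ the cyclic rotation $M_{k+1} \cdots M_K M_1 \cdots M_{k-1}$ is a word in $\varphi(\mG)$ equal to $g^{-1}$, so $\langle \mG \rangle$ is closed under inversion. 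I would then translate between words and polynomial equations via the walk-on-$\Z$ viewpoint: with $M_r$ having lower-right $X^{b_r}$ ($b_r = \pm 1$) and upper-right $H_r$, the condition $W = I$ is equivalent to $b_1 + \cdots + b_K = 0$ together with $\sum_{r} X^{S_r} H_r = 0$, where $S_r := -(b_1 + \cdots + b_r)$; so $(S_r)$ traces a closed walk at $0$, $A$-steps descend and $B$-steps ascend, and on each edge $\{k-1, k\}$ descents and ascents are equinumerous.

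Forward direction: Given $W = I$ using every generator, fix for each edge $\{k-1, k\}$ any bijection between its descents and ascents. A matched $A_i$-descent (contributing $X^{k-1} H_i$) and $B_j$-ascent (contributing $X^k H_j$) together contribute $X^k h_{ij}$. Regrouping the relation $\sum_r X^{S_r} H_r = 0$ by label yields $\sum_{(i,j)} f_{ij} h_{ij} = 0$, where $f_{ij}$ is a sum of monomials $X^k$ and is thus a Laurent polynomial with non-negative integer coefficients; multiplying through by $X^N$ for $N$ large enough shifts all $f_{ij}$ into $\N[X]$. Setting $S := \{(i,j) : f_{ij} \neq 0\}$, the fact that every $A_i$ and $B_j$ appears in $W$ forces $\pi_I(S) = I$ and $\pi_J(S) = J$.

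Backward direction: Given $(S, f_{ij})$ with $\sum f_{ij} h_{ij} = 0$, interpret each monomial $c X^k$ of $f_{ij}$ as $c$ copies of a \emph{block} consisting of one directed edge from $k$ to $k-1$ labeled $A_i$ and one from $k-1$ to $k$ labeled $B_j$. The resulting directed multigraph on $\Z$ has in-degree equal to out-degree at every vertex. To ensure it is connected and contains vertex $0$, I would first divide all $f_{ij}$ by the largest common power of $X$ so that some $f_{i^*j^*}$ has a non-zero constant term, and then replace every $f_{ij}$ by $(1 + X + \cdots + X^N) f_{ij}$ for $N$ at least the largest exponent appearing in any $f_{ij}$. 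This preserves the equation while forcing the support of each $f_{ij}$ to contain a consecutive interval $[k_{ij,\min}, k_{ij,\min} + N]$, so the level intervals of the blocks all overlap with that of $f_{i^*j^*}$, namely $[0, N]$, producing a connected multigraph through vertex $0$. An Eulerian circuit starting at $0$ then spells out a word $W$ with lower-right $X^0 = 1$ and upper-right $\sum f'_{ij} h_{ij} = 0$, so $W = I$; and $W$ uses every generator because $\pi_I(S) = I$, $\pi_J(S) = J$. I expect this direction to be the main obstacle, as the polynomial equation merely prescribes how many blocks of each type to use but not how to assemble them into a single closed walk based at $0$; the padding trick $f_{ij} \mapsto (1 + X + \cdots + X^N) f_{ij}$ is precisely the device that resolves this wiring problem.
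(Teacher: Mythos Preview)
Your proposal is correct and reaches the same conclusion as the paper, but by a genuinely different route in both directions.

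For the forward direction, the paper proceeds by induction on the length of the height-zero word $w$, with a case split on whether the first and last letters lie in $\varphi(\mG_+)$ or $\varphi(\mG_-)$: in the mixed case it peels off a single $h_{ij}$ term and recurses on the inner word, and in the unmixed case it splits $w$ at an interior prefix of height zero. You instead write down the closed-walk formula $U(\pi(W))=\sum_r X^{S_r}H_r$ once and for all, observe that on each edge $\{k-1,k\}$ the number of $A$-descents equals the number of $B$-ascents, and match them arbitrarily so that each pair contributes a single monomial times $h_{ij}$. Your argument is shorter and more transparent; the paper's induction is more explicit about how the $f_{ij}$ arise letter by letter.

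For the backward direction, the paper multiplies the $f_{ij}$ by $(1+X)^m$ so that selected ones lie in $\Np[X]$, hand-builds a backbone word $w_0$ whose suffix heights realise every level from $0$ to $\deg f_{yz}$, and then inserts loops $A_iB_j$ at the correct heights to account for the remaining coefficients. You instead interpret each monomial of each $f_{ij}$ as a pair of oppositely oriented edges on $\Z$, pad by $(1+X+\cdots+X^N)$ to force the supports to be overlapping intervals (hence the multigraph is connected through vertex~$0$), and read off an Eulerian circuit. The two preprocessing tricks play the same role---guaranteeing that the pieces can be assembled into a single closed walk based at $0$---but your Eulerian formulation makes the wiring step a one-line appeal to a standard theorem, whereas the paper's construction is entirely explicit and avoids any graph-theoretic machinery.
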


\begin{proof}
    For a word $w$ in the alphabet $\varphi(\mG)$, define its \emph{product} $\pi(w)$ to be the matrix obtained by multiplying all the matrices in $w$ consecutively.
    Denote by $|w|_{+}$ (respectively, $|w|_{-}$) the number of letters in $w$ belonging in $\varphi(\mG_+)$ (respectively, $\varphi(\mG_-)$).
    Define the \emph{height} of the word $w$ to be $h(w) \coloneqq |w|_{+} - |w|_{-}$, then we have
    $
    \pi(w) = 
    \begin{pmatrix}
            1 & * \\
            0 & X^{h(w)}
    \end{pmatrix},
    $
    where $*$ is some element in $\Z[X, X^{-1}]$.

    For a finite alphabet $\mathcal{A}$, denote by $\mathcal{A}^+$ the set of non-empty words over $\mathcal{A}$.
    We claim that for any non-empty word $w \in \varphi(\mG)^+$ such that $h(w) = 0$, the upper right entry of $\pi(w)$ can be written as a sum $\sum_{(i,j) \in I \times J} f_{ij} h_{ij}$, where $f_{ij}$ are elements in $\N[X, X^{-1}]$.
    We prove this by induction the length of the word $w$.
    For the sake of simplicity, denote $U(\pi(w))$ the upper right entry of $\pi(w)$.
    
    If $w$ has length at most two, then it must be of the form $A_i B_j$ or $B_j A_i$, and the claim is easy to verify.
    Suppose the claim is true for all words $w$ of length less then $\ell > 2$.
    We prove the claim for words $w$ of length $\ell$.
    Distinguish the following two cases.
    
    \begin{enumerate}[1.]
    
        \item \textbf{The word $w$ is of the form $A_i w' B_j$ or $B_j w' A_i$ for some $i \in I, j \in J, w' \in \varphi(\mG)^+$.}
        Since $w'$ has length at most $\ell-2$ and is of height $0$, by induction hypothesis, 
        $\pi(w') = \begin{pmatrix}
            1 & r \\
            0 & 1
        \end{pmatrix}$, 
        with $r$ a linear combination of $h_{ij}$ with coefficients in $\N[X, X^{-1}]$. 
        If $w = A_i w' B_j$, then 
        \[
        \pi(w) = 
        \begin{pmatrix}
            1 & H_i \\
            0 & X
        \end{pmatrix}    
        \begin{pmatrix}
            1 & r \\
            0 & 1
        \end{pmatrix}
        \begin{pmatrix}
            1 & H_j \\
            0 & X^{-1}
        \end{pmatrix}
        =
        \begin{pmatrix}
            1 & X^{-1} r + (X^{-1} H_i + H_j) \\
            0 & 1
        \end{pmatrix}    
        =
        \begin{pmatrix}
            1 & X^{-1} r + h_{ij} \\
            0 & 1
        \end{pmatrix}.    
        \]
        So $U(\pi(w)) = X^{-1} r + h_{ij}$ can also be written as a linear combination of $h_{ij}, i \in I, j \in J$ with coefficients in $\N[X, X^{-1}]$.
        If $w = B_j w' A_i$, then 
        \[
        \pi(w) = 
        \begin{pmatrix}
            1 & H_j \\
            0 & X^{-1}
        \end{pmatrix}  
        \begin{pmatrix}
            1 & r \\
            0 & 1
        \end{pmatrix}
        \begin{pmatrix}
            1 & H_i \\
            0 & X
        \end{pmatrix} 
        =
        \begin{pmatrix}
            1 & X r + H_i + X H_j \\
            0 & 1
        \end{pmatrix}    
        =
        \begin{pmatrix}
            1 & X(r + h_{ij}) \\
            0 & 1
        \end{pmatrix}. 
        \]
        So $U(\pi(w)) = X(r + h_{ij})$ can also be written as a linear combination of $h_{ij}, i \in I, j \in J$ with coefficients in $\N[X, X^{-1}]$.
        
        \item \textbf{The word $w$ is of the form $A_i w' A_{i'}$ or $B_{j} w' B_{j'}$ for some $i, i' \in I$ or $j, j' \in J$.}
        First suppose $w = A_i w' A_{i'}$.
        Since $h(A_i) = 1 > 0$ and $h(A_i w') = -1 < 0$, there must exist a strict prefix $v$ of $w$ with height zero.
        This is because by reading the word $w$ letter by letter, this height of consecutive prefixes differs by at most one.
        We have $w = v v'$ with $h(v) = h(v') = 0$ where $v, v'$ are non-empty words.
        By induction hypothesis, $U(\pi(v)), U(\pi(v'))$ can be written as a linear combination of $h_{ij}$ with coefficients in $\N[X, X^{-1}]$.
        Therefore $U(\pi(w)) = U(\pi(v)) + U(\pi(v'))$ also satisfies this claim.
        The case where $w = B_{j} w' B_{j'}$ is completely analogous.
    \end{enumerate}
    Combining the two cases concludes the induction.
    It is easy to see from the induction process that if the letter $A_i$ appears in $w$, then the coefficient of the term $h_{ij}$ in the linear combination is not zero for some $j \in J$. 
    This is because at some point we have replaced $r$ with either $X^{-1} r + h_{ij}$ or $X(r + h_{ij})$.
    Similarly, if the letter $B_j$ appears in $w$, then the coefficient of the term $h_{ij}$ in the linear combination is non-zero for some $i \in I$.

    If the semigroup $\langle \mG \rangle$ is a group, then there exists a word $v$ in the alphabet $\mG$ using all letters in $\mG$, whose corresponding product is the neutral element.
    Taking the image under $\varphi$ yields a word $w = \varphi(v)$ in the alphabet $\varphi(\mG)$ such that $h(w) = 0$ and $U(\pi(w)) = 0$.
    The claim above and the discussion following it show that there exist Laurent polynomials $f_{ij} \in \N[X, X^{-1}]$ such that $\sum_{(i, j) \in I \times J} f_{ij} h_{ij} = 0$.
    Furthermore, all letters $A_i, i \in I$ and $B_j, j \in J$ appear in $w$, so for every $i$, the coefficient $f_{ij}$ in the linear combination is not zero for some $j \in J$; and for every $j$, the coefficient $f_{ij}$ is not zero for some $i \in I$.
    Let $S \coloneqq \{(i, j) \in I \times J \mid f_{ij} \neq 0\}$, then $\sum_{(i, j) \in S} f_{ij} h_{ij} = 0$, and $\pi_I(S) = I, \pi_J(S) = J$.
    By the homogeneity of the equation $\sum_{(i, j) \in S} f_{ij} h_{ij} = 0$, one can multiply all $f_{ij}$ by the monomial $X^n$ for a sufficiently large $n$, and suppose $f_{ij} \in \N[X] \setminus \{0\}$ instead of $\N[X, X^{-1}] \setminus \{0\}$. 
    This completes the proof of the first direction of implication in Proposition~\ref{prop:wrtopos}.

    For the other direction of implication, suppose there exists a set $S \subseteq I \times J$ satisfying $\pi_I(S) = I, \pi_J(S) = J$, such that the equation $\sum_{(i, j) \in S} f_{ij} h_{ij} = 0$ has a solution $(f_{ij})_{(i, j) \in S}$ over $\N[X] \setminus \{0\}$.
    By the homogeneity of the equation, suppose that there is a tuple $(u,v) \in S$ such that $X \nmid f_{uv}$.
    Let $(y,z) \in S$ be a tuple such that $\deg f_{yz} \geq \deg f_{ij}$ for all $(i,j) \in S$.

    Denote by $\Np[X]$ the set of polynomials of the form $\sum_{i = 0}^d a_i X^i$, where $d \geq 0$ and $a_i > 0$ for all $i$.
    By multiplying all $f_{ij}$ by the polynomial $(1 + X)^m$ for a sufficiently large $m$, we can suppose that $f_{uv} \in \Np[X]$, $X^{-v_0(f_{yz})} f_{yz} \in \Np[X]$, and $\deg f_{uv} \geq v_0(f_{yz})$.
    Indeed, we can take any $m \geq \max\{\deg f_{uv}, \deg X^{-v_0(f_{yz})} f_{yz}, v_0(f_{yz})\}$.
    Additionally, the condition that $\deg f_{yz} \geq \deg f_{ij}$ for all $(i,j) \in S$ is still satisfied after this multiplication.

    We now construct a word $w \in \varphi(\mG)^+$ that uses every letter in $\varphi(\mG)$, such that $h(\pi(w)) = 0$, $U(\pi(w)) = \sum_{(i, j) \in S} f_{ij} h_{ij} = 0$.
    We start with the word 
    \[
        w_0 \coloneqq A_u^{\deg f_{uv}} A_y^{\deg f_{yz} - \deg f_{uv}} B_z^{\deg f_{yz} - \deg f_{uv}} B_v^{\deg f_{uv}},
    \]
    which has height 0, and whose product has upper-right entry 
    \[
        U(\pi(w_0)) = h_{uv} \cdot \sum_{i = 0}^{\deg f_{uv}-1} X^i + h_{yz} \cdot \sum_{i = \deg f_{uv}}^{\deg f_{yz}-1} X^i.
    \]
    Since $f_{uv} \in \Np[X]$, $X^{-v_0(f_{yz})} f_{yz} \in \Np[X]$, and $\deg f_{uv} \geq v_0(f_{yz})$, the polynomials $\hat{f}_{uv} \coloneqq f_{uv} - \sum_{i = 0}^{\deg f_{uv}-1} X^i$, $\hat{f}_{yz} \coloneqq f_{yz} - \sum_{i = \deg f_{uv}}^{\deg f_{yz}-1} X^i$ are still polynomials in $\N[X] \setminus \{0\}$.
    For $(i, j) \in S$, define 
    \begin{align*}
    \hat{f}_{ij} \coloneqq
        \begin{cases}
            \hat{f}_{uv} & (i, j) = (u, v) \\
            \hat{f}_{yz} & (i,j) = (y, z) \\
            f_{ij} & \text{otherwise}.
        \end{cases}
    \end{align*}
    These are elements in $\N[X] \setminus \{0\}$ and satisfy
    $
        U(\pi(w_0)) + \sum_{(i,j) \in S} \hat{f}_{ij} h_{ij} = \sum_{(i, j) \in S} f_{ij} h_{ij} = 0.
    $
    
    We then gradually insert ``loops'' of the form $A_i B_j$ into the word $w_0$. This insertion does not change the height of the word, but it adds a multiple of $h_{ij}$ to the upper-right entry of the product.
    Indeed, if $h(vv') = 0$, then we have $h(v A_i B_j v') = 0$ and $U(\pi(v A_i B_j v')) = U(\pi(v v')) + X^{h(v)} h_{ij}$.
    Note that the initial word $w_0$ has suffixes of all heights from $0$ to $\deg f_{yz}$.
    For each $k = 0, \ldots, \deg f_{yz}$ and each $(i, j) \in S$, after a suffix of height $k$, we insert $\Coef_{X^k} (\hat{f}_{ij})$ times the ``loop'' $A_i B_j$, where $\Coef_{X^k} (\hat{f}_{ij})$ is the coefficient of the monomial $X^k$ in the polynomial $\hat{f}_{ij}$.
    The upper-right entry of the product after all these insertions will be
    \[
        U(\pi(w_0)) + \sum_{k = 0}^{\deg f_{yz}} \sum_{(i,j) \in S} \Coef_{X^k} (\hat{f}_{ij}) X^k \cdot h_{ij} = U(\pi(w_0)) + \sum_{(i,j) \in S} \hat{f}_{ij} h_{ij} = 0,
    \]
    because $\deg f_{yz} \geq \deg f_{ij}$ for all $(i,j) \in I \times J$.
    See Figure~\ref{fig:graph} for an example of this construction.
    
    We have thus constructed a word $w \in \varphi(\mG)^+$ such that $h(\pi(w)) = 0$, $U(\pi(w)) = 0$.
    Note that we have inserted at least one loop $A_i B_j$ for each $(i, j) \in S$.
    Since $\pi_I(S) = I, \pi_J(S) = J$, the word $w$ contains every letter $A_i, i \in I$ and $B_j, j \in J$.
    Because $\pi(w)$ is the neutral element, the inverse of every letter in $w$ can be written as a product of matrices in $\varphi(\mG)$.
    Indeed, if $w = v X v'$ then $X^{-1} = \pi(v' v)$.
    Thus the inverse of every element of $\varphi(\mG)$ is in $\langle\varphi(\mG)\rangle$.
    We conclude that $\langle \varphi(\mG) \rangle$, and thus $\langle \mG \rangle$, is a group.
\end{proof}

\begin{figure}[ht]
\centering
\includegraphics[width=\textwidth]{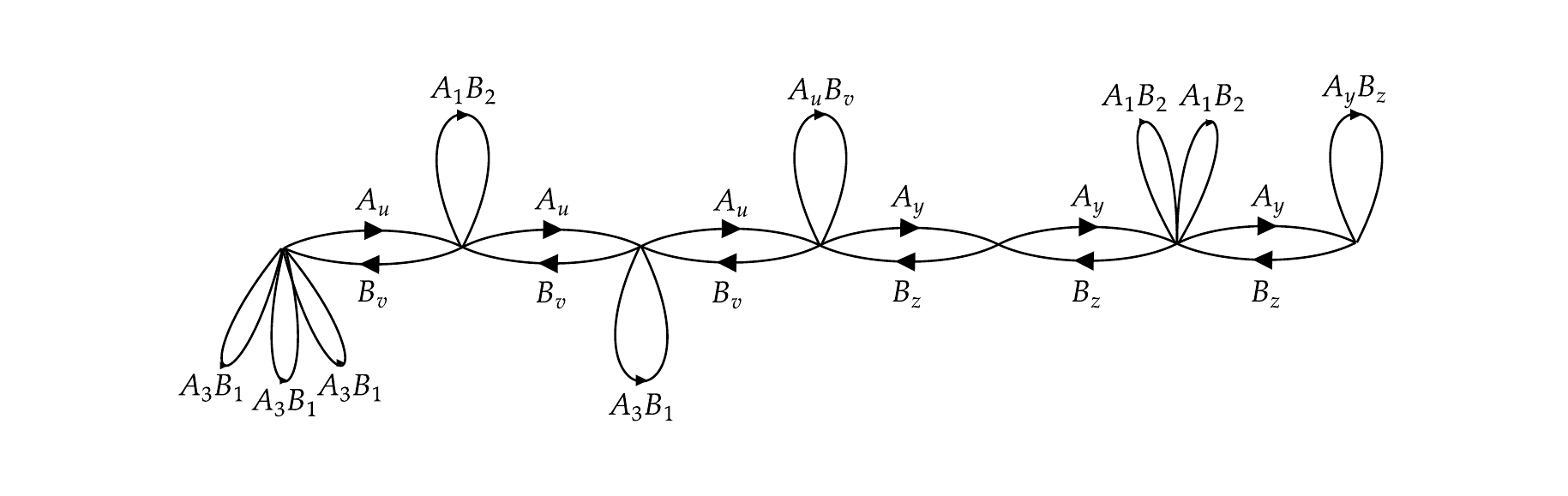}
\caption{Example of a word constructed in the proof of Proposition~\ref{prop:wrtopos}. 
Here, $S = \{(u, v), (y, z), (1, 2), (3, 1)\}$, and $f_{uv} = 1 + X + X^2 + X^3$, $f_{yz} = X^3 + X^4 + X^5 + X^6$, $f_{12} = X + 2 X^5$, $f_{31} = 3 + X^2$.
The constructed word is $A_u (A_1 B_2) A_u A_u (A_u B_v) A_y A_y (A_1 B_2)(A_1 B_2) A_y (A_y B_z) B_z B_z B_z B_z (A_3 B_1) B_z B_z (A_3 B_1)(A_3 B_1)(A_3 B_1)$.}
\label{fig:graph}
\end{figure}

We have thus established the link between the Group Problem in $\Z \wr \Z$ and homogeneous linear equations over $\N[X]$.
Theorem~\ref{thm:wreathdec} follows from Proposition~\ref{prop:wrtopos} and the decidability result of Theorem~\ref{thm:decidability}.
Its proof is given in Appendix~\ref{app:proofs}.


\newpage

\bibliography{pospoly}

\appendix

\section{Omited proofs}\label{app:proofs}
\propclass*
\begin{proof}
Since $\R \subseteq A_v$, every element $r \in \R \setminus \{0\}$ satisfies $v(r) \geq 0$ and $v(r^{-1}) \geq 0$.
But $v(r) + v(r^{-1}) = v(1) = 0$, so $v(r) = 0$.
Consider the value $v(X)$, there are two possibilities:
\begin{enumerate}
    \item If $v(X) \geq 0$.
    In this case, we have $\R \subseteq A_v$ and $X \in A_v$, therefore $\R[X] \subseteq A_v$.
    Since $M_v$ is a maximal (hence prime) ideal of $A_v$, the ideal $\R[X] \cap M_v$ is a prime ideal of $\R[X]$. 
    Furthermore, $\R[X] \cap M_v$ is not zero, otherwise every element of $\R[X] \setminus \{0\}$ would be invertible in $A_v$, so $\R(X) \subseteq A_v$, contradicting the non-triviality of $v$.
    Since $\R[X]$ is a principle ideal domain, the non-zero prime ideal $\R[X] \cap M_v$ is generated by a single irreducible polynomial in $\R[X]$.
    Consider the two cases:
    \begin{enumerate}
        \item The ideal $\R[X] \cap M_v$ is generated by a polynomial $X - t$ for some $t \in \R$.
        In this case we have $v(X - t) > 0$.
        Every polynomial $f \in \R[X]$ not divisible by $(X - t)$ can be written as $f = (X - t) \cdot F + r$ for some $F \in \R[X]$, $r \in \R \setminus \{0\}$.
        Since $v((X - t) \cdot F) = v(X - t) + v(F) > 0$ and $v(r) = 0$, we have $v(f) = v(r) = 0$.
        
        Every element $y \in \R(X)$ can be written as $y = (X- t)^a \cdot \frac{f}{g}$, where $f, g$ are polynomials in $\R[X]$ not divisible by $(X - t)$.
        Then $v(y) = a \cdot v(X- t) + v(f) - v(g) = a v(X - t)$.
        Under isomorphism of the value group $\Gamma$, we can without loss of generality we can suppose $v(X - t) = 1$, then we get the valuation $v_t$ of type 1 described in the proposition.
        Since every element $y \in M_{v_t}$ satisfies $y(t) = 0$, we have that $y + M_{v_t} \mapsto y(t)$ is an isomorphism from the residue field to $\R$; it is a formally real field.
        \item The ideal $\R[X] \cap M_v$ is generated by a polynomial $X^2 + c X + d$ without real roots.
        In this case, the residue field $A_v / M_v$ is a quadratic extension of $\R$, and is hence isomorphic to the field $\C$.
        However $\C$ is not formally real.
        Indeed, suppose on the contrary that $\C$ admits some ordering $\leq$, then since $0 < i^2 = -1$ and $0 < 1^2 = 1$, we have $0 < (-1) + 1 = 0$, a contradiction.
    \end{enumerate}
    \item If $v(X) < 0$. 
    In this case we have $\R[1/X] \subseteq A_v$ and $1/X \in M_v$.
    Since $\R[1/X] \cap M_v$ is a prime ideal of $\R[1/X]$ that contains $1/X$, it is generated by $1/X$.
    Then similar to the case 1.a., every element $y \in \R(X)$ can be written as $y = (1/X)^a \cdot \frac{F}{G}$, where $F, G$ are polynomials in $\R[1/X]$ not divisible by $1/X$.
    Without loss of generality suppose $v(1/X) = 1$, we have $v(y) = a$. Rewrite $y = \frac{f}{g}$, comparing degrees, we have $a = \deg g - \deg f$.
    So $v$ is the valuation $v_{\infty}$ of type 2 described in the proposition.
    Since every element $y \in M_{v_{\infty}}$ satisfies $\lim_{t \rightarrow \infty} y (t) = 0$, we have that $y + M_{v_{\infty}} \mapsto \lim_{t \rightarrow \infty} y(t)$ is an isomorphism from the residue field to $\R$; it is a formally real field.
\end{enumerate}
\end{proof}

\propNtoR*
\begin{proof}
A solution over $\N[X] \setminus \{0\}$ is obviously also a solution over $\Rp[X] \setminus \{0\}$.
Conversely, let $f_i = \sum_{j = 0}^{d_i} a_{ij} X^j, i = 1 \ldots, n,$ be a solution of $f_1 h_1 + \cdots + f_n h_n = 0$.
Write $h_i = \sum_{j = 0}^{e_i} b_{ij} X^j, i = 1 \ldots, n$, then the equation $f_1 h_1 + \cdots + f_n h_n = 0$ is equivalent to the system of equations
\begin{equation}\label{eq:syslin}
    \sum_{i = 1}^n \sum_{j = 0}^d a_{ij} b_{i, d - j} = 0, \quad d = 1, \ldots, \max_{1 \leq i \leq n}{(d_i + e_i)}.
\end{equation}
All the coefficients $b_{ij}$ are integers, and $b_{i, d - j} = 0$ whenever $d - j < 0$.

If $f_1 h_1 + \cdots + f_n h_n = 0$ has a solution over $\Rp[X] \setminus \{0\}$, then System~\eqref{eq:syslin} has a solution $a_{ij}, i = 1, \ldots, n, j = 1, \ldots, d_i$ over $\R$, satisfying 
\begin{equation}\label{eq:syspos}
    a_{ij} \geq 0, \quad i = 1, \ldots, n, \quad j = 1, \ldots, d_i,
\end{equation}
and
\begin{equation}\label{eq:sysineq}
    a_{i1} \neq 0 \text{ or } a_{i2} \neq 0 \text{ or } \ldots \text{ or } a_{i d_i} \neq 0, \quad i = 1, \ldots, n.
\end{equation}
This condition is a boolean combination of homogeneous linear inequalities with integer coefficients.
Since the linear Systems~\eqref{eq:syslin}, \eqref{eq:syspos} and \eqref{eq:sysineq} have only integer coefficients, they have a solution over $\R$ if and only if they have a solution over $\Q$.
Then, by their homogeneity, they have a solution over $\Q$ if and only if they have a solution over $\Z$.
Hence, the Systems~\eqref{eq:syslin}, \eqref{eq:syspos}, \eqref{eq:sysineq} have a solution over $\Z$, meaning $f_1 h_1 + \cdots + f_n h_n = 0$ has a solution $f_i = \sum_{j = 0}^{d_i} a_{ij} X^j, i = 1 \ldots, n,$ over $\N[X] \setminus \{0\}$.
\end{proof}

\propRtoU*
\begin{proof}
    Obviously a solution over $\Rp[X] \setminus \{0\}$ is a solution over $\U(\Rpp)$.

    For the other implication, we use P\'{o}lya's Theorem (Lemma~\ref{lem:polya}).
    Suppose $f_1 h_1 + \cdots + f_n h_n = 0$ has a solution $(f_1, \ldots, f_n)$ over $\U(\Rpp)$.
    Write $f_i = X^{c_i} \cdot F_i$ where $c_i \neq 0$ and $F_i \in \R[X]$ is such that $X \nmid F_i$.
    Since $X \nmid F_i$ we have $F_i(0) \neq 0$, we claim that $F_i(0) > 0$.
    In fact, if $F_i(0) < 0$, then by the continuity of $F_i$, there exists $\varepsilon > 0$ such that $F_i(\varepsilon) < 0$, but then $f_i(\varepsilon) = \varepsilon^{c_i} F_i(\varepsilon) < 0$, contradicting the fact that $f_i \in \U(\Rpp)$.
    Furthermore, one easily sees that $F_i(x) = \frac{f_i(x)}{x^{c_i}} > 0$ for all $x > 0$.
    So we have shown $F_i(x) > 0$ for all $x \geq 0$.
    
    We now show that for large enough $p \in \N$, the polynomials $\hat{f}_i \coloneqq (X + 1)^p \cdot f_i$ are all in $\Rp[X]$.
    Let $Y$ be a new variable, and for every $i$, let $G_i$ be the homogenization of $F_i$ using the variable $Y$.
    That is, $G_i = F_i(X/Y) \cdot Y^{\deg(F_i)}$.
    Since $F_i(x/y) > 0$ for all $x/y \geq 0$, we have $G_i(x, y) > 0$ for all $x \geq 0, y > 0$. 
    Whereas for $x > 0, y = 0$, $G_i(x, y)/x^{\deg(F_i)}$ is the leading coefficient of $F_i$.
    This is non-zero and thus must be positive because $\lim_{x \rightarrow \infty}F_i(x) > 0$.
    Therefore $G_i(x, y) > 0$ for $x > 0, y = 0$.
    
    We have thus shown $G_i(x, y) > 0$ for all $x \geq 0, y \geq 0, x+y > 0$.
    Applying P\'{o}lya's Theorem yields the existence of a $p_i \in \N$ such that $(X + Y)^{p_i} \cdot G_i \in \Rp[X, Y]$.
    Taking $Y = 1$ we dehomogenize $G_i$ and obtain $(X + 1)^{p_i} \cdot F_i \in \Rp[X]$.
    Let $p = \max\{p_1, \ldots, p_n\}$, then 
    \[
        \hat{f}_i = (X + 1)^{p} \cdot f_i = X^{c_i} \cdot (X + 1)^{p} \cdot F_i \in \Rp[X] \setminus \{0\}
    \]
    for all $i$.
    We have thus found the solution $(\hat{f}_1, \ldots, \hat{f}_n)$ over $\Rp[X] \setminus \{0\}$ for the equation $f_1 h_1 + \cdots + f_n h_n = 0$.
\end{proof}

\begin{rmk}\label{rmk:UvsW}
    Proposition~\ref{prop:RtoU} no longer holds if we replace $\U(\Rpp)$ with $\W(\Rpp) \setminus \{0\}$.
    For example, take $n = 2, h_1 = 1, h_2 = - (X-1)^2$.
    Then $f_1 = (X-1)^2, f_2 = 1$ is a solution over $\W(\Rpp) \setminus \{0\}$ of the equation $f_1 h_1 + f_2 h_2 = 0$.
    However, $f_1 h_1 + f_2 h_2 = 0$ does not admit a solution over $\Rp[X] \setminus \{0\}$.
    Indeed, any solution of $f_1 - f_2 \cdot (X-1)^2 = 0$ over $\R[X]$ must satisfy $f_1(1) = 0$, so $f_1$ cannot be in $\Rp[X] \setminus \{0\}$.
\end{rmk}

\wreathdec*
\begin{proof}
\begin{enumerate}
    \item For the Group Problem, by Proposition~\ref{prop:wrtopos} it suffices to decide whether there exists a set $S \subseteq I \times J$ satisfying $\pi_I(S) = I, \pi_J(S) = J$, such that the equation $\sum_{(i, j) \in S} f_{ij} h_{ij} = 0$ has a solution $(f_{ij})_{(i, j) \in S}$ over $\N[X] \setminus \{0\}$.
    By the homogeneity of the equation $\sum_{(i, j) \in S} f_{ij} h_{ij} = 0$, one can multiply all the Laurent polynomials $h_{ij}$ by a power of $X$ and suppose all $h_{ij} \in \Z[X]$.
    For every set $S \subseteq I \times J$ satisfying $\pi_I(S) = I, \pi_J(S) = J$, we can use Theorem~\ref{thm:decidability} to decide whether $\sum_{(i, j) \in S} f_{ij} h_{ij} = 0$ has a solution over $\N[X] \setminus \{0\}$.
    This shows the decidability of the Group Problem.
    \item The neutral element is in $\langle \mG \rangle$ if and only if a non-empty subset of $\mG$ generates a group (as a semigroup).
    This is because, if the product of a word $w \in \mG^+$ is the neutral element, then every element in the set $C$ of letters used in $w$ can be inverted in $\langle C \rangle$, so $\langle C \rangle$ is a group.
    Therefore, in order to decide whether the neutral element is in $\langle \mG \rangle$, it suffices to check for all subsets of $\mG$ whether they generate a group.
    This is decidable by the above result on the Group Problem.
\end{enumerate}
\end{proof}

\section{Algorithm for Theorem~\ref{thm:mainlocglob}}\label{app:alg}
\begin{algorithm}[h]
\caption{Deciding existence of solutions over $\N[X] \setminus \{0\}$ of the equation $f_1 h_1 + \cdots + f_n h_n = 0$.}
\label{alg:dec}
\begin{description}
\item[Input:] 
Polynomials $h_1, \ldots, h_n \in \Z[X]$.
\item[Output:] 
\textbf{True} or \textbf{False}.
\end{description}
\begin{enumerate}[(1)]
    \item
    Compute $d \coloneqq \gcd(h_1, \ldots, h_n)$ and divide all $h_i$ by $d$.
    \item Repeat the following:
    \begin{enumerate}
        \item If $h_i(0) > 0$ for all $i$, or $h_i(0) < 0$ for all $i$, return \textbf{False}.
        \item Else if $h_i(0) \geq 0$ for all $i$, or $h_i(0) \leq 0$ for all $i$, divide all the polynomials $h_i$ that satisfy $h_i(0) = 0$ by $X$.
        \item Else go to \ref{step:3}.
    \end{enumerate}    
    \item\label{step:3} Decide the truth of the existential sentence~\eqref{eq:sentreal} in the theory of reals.
    If \eqref{eq:sentreal} is true, return \textbf{False}, otherwise return \textbf{True}.
\end{enumerate}
\end{algorithm}

\section{Comparison with the Br\"ocker-Prestel local-global principle}\label{app:compare}
The original Br\"ocker-Prestel local-global principle (\cite[Theorem~8.13]{prestel2007lectures}) can be formulated as follows.

\begin{thrm}[Br\"ocker-Prestel local-global principle]\label{thm:Presori}
    Let $F$ be a formally real field, and $h_1, \ldots, h_n$ be non-zero elements of $F$.
    If the equation $f_1 h_1 + \cdots f_n h_n = 0$ has no non-trivial solution $(f_1, \ldots, f_n) \neq (0, \ldots, 0)$ over \emph{sums of squares} of $F$ (that is, over the set $S \coloneqq \{\sum_{i = 1}^k a_i^2 \mid a_i \in F\}$), then at least one of the following hold:
    \begin{enumerate}[(i)]
        \item $h_1, \ldots, h_n$ are all of the same sign in some archimedean ordering of $F$.
        \item $f_1 h_1 + \cdots f_n h_n = 0$ has no solution in the \emph{Henselization} of some real place of $F$.
    \end{enumerate}
\end{thrm}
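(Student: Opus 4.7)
The plan is to mirror, at the level of a general formally real field, the valuation-theoretic strategy already carried out for Theorem~\ref{thm:locglobpos} (which itself is an adaptation of Prestel's argument). Assume the equation has no non-trivial solution over the sums-of-squares set $S$, and form the candidate pre-semicone
\[
    P_0 \coloneqq \left\{ \sum_{i=1}^n f_i h_i \;\middle|\; f_1, \ldots, f_n \in S \right\}.
\]
First I would check that $P_0$ is a pre-semicone: closure under addition is immediate from $S+S\subseteq S$; closure under multiplication by $F^2$ follows from $F^2 \cdot S \subseteq S$; and $P_0 \cap -P_0 = \{0\}$ is precisely where the hypothesis is used, together with the fact that in a formally real field a sum $f_i+g_i = 0$ of sums of squares forces $f_i = g_i = 0$, so that $c\in P_0\cap -P_0\setminus\{0\}$ would yield a genuine non-trivial combination. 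By Lemma~\ref{lem:113}, $P_0$ extends to a semicone $P$ (or $-P$) of $F$; after possibly negating all $h_i$ I may assume $P \supseteq P_0$.

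Then I would split on whether the semiordering $\leq_P$ is archimedean. If yes, Lemma~\ref{lem:120} upgrades it to an archimedean \emph{ordering} of $F$; and since taking $f_i = 1\in S$ and $f_j = 0\in S$ for $j\neq i$ shows $h_i \in P_0 \subseteq P$, every $h_i$ is non-negative in this archimedean ordering, giving conclusion (i) (after unwinding the possible sign flip). If the semiordering is non-archimedean, I apply Lemma~\ref{lem:713} with $F_0 = \mathbb{Q}$ (the prime subfield of $F$, which exists since formally real fields have characteristic $0$) to obtain a valuation $v$ whose ring is $A_{\mathbb{Q}}^P$. Non-archimedeanness implies $A_{\mathbb{Q}}^P \neq F$, so $v$ is non-trivial; Lemma~\ref{lem:715} yields a semiordering on the residue field $F_v$, and then Lemma~\ref{lem:115} tells us $F_v$ is formally real. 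Thus $v$ is a non-trivial real place of $F$.

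It remains to derive conclusion (ii) for this specific $v$, i.e.\ that $\sum f_i h_i = 0$ has no sums-of-squares solution over the Henselization $F^h$ of $(F,v)$. The plan here is to lift the semiordering: any semiordering on $F_v$ extends canonically to a semiordering on $F^h$ via the Henselian structure (this is the content of the Baer--Krull type theorem used in Prestel's book), and the extension contains all sums of squares of $F^h$ in its non-negative cone. If $\sum \tilde f_i h_i = 0$ held with $\tilde f_i$ sums of squares in $F^h$ not all zero, then an approximation argument using the density of $F$ in $F^h$ would produce, after truncating the $\tilde f_i$ to elements close enough in $F$, a non-trivial relation showing $0 \in P_0$, contradicting our setup.

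The main obstacle will clearly be this last step: making the passage from $F^h$ back down to $F$ rigorous. The preliminaries in Section~\ref{sec:prelim} do not develop the theory of Henselizations, so I would need to import the fact that semiorderings lift through the Henselization with prescribed residue, together with a suitable approximation lemma (elements of $F^h$ can be arbitrarily well approximated in the valuation topology by elements of $F$, preserving sum-of-squares-ness modulo high-value error). Once those tools are in hand, the contradiction is mechanical; without them, this step is the technical heart of the theorem and is precisely the difficulty that the strictly-positive variant Theorem~\ref{thm:locglobpos} sidestepped by working concretely with $\R(X)$ and its explicit classification of real places (Proposition~\ref{prop:class}).
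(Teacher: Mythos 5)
Your attempt should first be measured against the fact that the paper does not actually prove Theorem~\ref{thm:Presori}: it is imported from Prestel (\cite[Theorem~8.12, 8.13]{prestel2007lectures}), and Appendix~\ref{app:compare} only sketches how its proof begins. Up to the point where that sketch ends, your argument is correct and is essentially Prestel's: the set $P_0$ of combinations $\sum_i f_i h_i$ with $f_i$ sums of squares is a pre-semicone (your verification of $P_0 \cap -P_0 = \{0\}$ via formal reality is right), Lemma~\ref{lem:113} extends it to a semicone $P$, the archimedean case gives alternative (i) via Lemma~\ref{lem:120} and $h_i \in P_0$, and in the non-archimedean case $A_{\Q}^P$ together with Lemmas~\ref{lem:713}, \ref{lem:715} and \ref{lem:115} produces a non-trivial real place $v$. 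This is the same skeleton the paper adapts for Theorem~\ref{thm:locglobpos}.

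The genuine gap is alternative (ii), which you acknowledge but whose proposed repair does not work as stated. First, lifting \emph{the semiordering of the residue field} $F_v$ to the Henselization $F^h$ and observing that the lift contains all sums of squares buys nothing: \emph{every} semiordering contains all sums of squares by definition. What you need is a semiordering of $F^h$ in which all $h_i$ are strictly positive; that means extending $P$ itself (a semiordering of $F$ compatible with $v$) to $F^h$, which requires the Baer--Krull-type correspondence for semiorderings compatible with a valuation, using that $F^h$ has the same value group and residue field as $(F,v)$ --- a substantive theorem you neither state precisely nor prove. Second, the approximation argument fails: density of $F$ in $F^h$ lets you replace sums of squares $\tilde f_i \in F^h$ by sums of squares $f_i \in F$ only up to an error, so you obtain $\sum_i f_i h_i = \varepsilon$ with $v(\varepsilon)$ large, not $0 \in P_0$; turning ``weakly isotropic up to high-value error over $F$'' into a contradiction with the semiordering $P$ requires exactly the convexity/compatibility analysis of $P$ with $v$ (equivalently, a Springer-type residue-form argument over the Henselian field) that constitutes the technical heart of the Br\"ocker--Prestel theorem. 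As it stands, case (ii) is not proved, only deferred to unproven facts, one of which is misidentified.
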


For a definition of Henselizations of a formally real field, see \cite[Proposition~8.1]{prestel2007lectures}.

When applied to the field $F = \R(X)$, the Br\"ocker-Prestel local-global principle characterizes the absence of non-trivial solutions over sums of squares by condition (ii), since the field $\R(X)$ has no archimedean orderings.
Multiplying by the common denominator and using the fact that any element in $\W(\R)$ can be written as a sum of squares in $\R(X)$, Theorem~\ref{thm:Presori} also characterizes the absence of non-trivial solutions over $\W(\R)$.
However, when considering non-trivial solutions over $\U(\R)$ and $\U(B)$, the situation is quite different; and we now compare the proof of Theorem~\ref{thm:locglobpos} to Theorem~\ref{thm:Presori}.

    The proof of Br\"ocker-Prestel's original theorem starts with the definition of the pre-semicone
    \[
        P_1 \coloneqq \left\{\sum_{i = 1}^n f_i h_i, \text{ where } f_i \text{ are sum of squares of elements in } \R(X) \right\}.
    \]
    Since it considers solutions over sum of squares, this definition is straightforward.
    The definition of $P_0$ is our proof of Theorem~\ref{thm:locglobpos} is different and less straightforward.
    In our theorem, we are considering \emph{strictly} positive polynomials on $B \subseteq \R$, therefore we need to replace sum of squares with polynomials in $\U(B)$. 
    However, such a naive replacement does not work due to the requirement of a pre-semicone to be closed under multiplication of squares (unlike $\W(\R)$, the set $\U(B)$ is not closed under multiplication by squares).
    This is why we need to add the rational function $\frac{g}{G}$ in the definition of $P_0$ and use the fundamental theorem of algebra to guarantee closure under addition.

    Note that in order to guarantee the closure under addition of $P_0$, it is essential that we work in the \emph{univariate} polynomial ring $\R[X]$, so that two polynomials $g, g'$ having a common root implies $\gcd(g, g') \neq 1$.
    For example, this no longer holds in the bivariate polynomial ring $\R[X, Y]$.
    Therefore, even when supposing $\gcd(\frac{gG'}{dD}, \frac{g'G}{dD}) = 1$, we no longer have $\left(f_i \frac{gG'}{dD} + f'_i \frac{g'G}{dD}\right)(x, y) > 0$ in Equation~\eqref{eq:add}.
    Thus, for the field $\R(X, Y)$, the closure under addition of $P_0$ no longer holds, a contrast with the ``non-strict'' version $P_1$.

    The following step of extracting the valuation ring $A_{\R}^P$ from the semiordering $P$ appeared as part of the proof of the original theorem. 
    (The original theorem used the valuation ring $A_{\Q}^P$ instead, but they are in fact equivalent.)
    This is the main part where we drew inspiration from the original local-global principle.
    
    After extracting the valuation ring $A_{\R}^P$, our proof again diverges from that of the original theorem.
    Our new definition of $P_0$ allows us to enforce strict positivity, however it also takes away some convenient properties of the pre-semicone $P_1$ in the original theorem.
    Notably, we have $h_1 \in P_1$, allowing for a quick conclusion on the positivity of $h_1$ in Henselizations.
    Whereas for $P_0$, we do not have $h_1 \in P_0$ due to the strict positivity of the coefficients $f_i$.
    We compensate this by the analytic approach adopted in the second half of our proof, making use of the classification of real places of $\R(X)$ and the continuity of functions in $\R[X]$.
    This part is absent from the proof of the original theorem, which is purely algebraic and model theoretic.
\end{document}